\newcommand{\R}{\mathbb{R}}
\numberwithin{equation}{section}
\newtheorem{theorem}{Theorem}[section]
\newtheorem{theorem*}{Theorem}
\newtheorem{remark}[theorem]{Remark}
\newtheorem{lemma}[theorem]{Lemma}
\newtheorem{proposition}[theorem]{Proposition}
\newtheorem{corollary}[theorem]{Corollary}
\newtheorem*{question*}{Question}
\newtheorem{example}[theorem]{Example}
\newtheorem{definition}[theorem]{Definition}
\newcommand\var{\mathop{\rm var}}
\newcommand\vol{{\rm vol}}
\newcommand\diam{{\rm diam}}
\begin{document}

\title{
{Nonpositive curvature,  the variance functional, and the Wasserstein barycenter}
\footnote{Y.-H.K. is supported in part by 
Natural Sciences and Engineering
Research Council of Canada (NSERC) Discovery Grants 371642-09 and 2014-0544,  as well as Alfred P. Sloan research fellowship 2012-2016.  B.P. is pleased to acknowledge the support of a University of Alberta start-up grant and National Sciences and Engineering Research Council of Canada Discovery Grant number 412779-2012. 
}}
\author{Young-Heon Kim\footnote{Department of Mathematics, University of British Columbia, Vancouver BC Canada V6T 1Z2 yhkim@math.ubc.ca} and Brendan Pass\footnote{Department of Mathematical and Statistical Sciences, 632 CAB, University of Alberta, Edmonton, Alberta, Canada, T6G 2G1 pass@ualberta.ca.}}
\maketitle
\begin{abstract} 
This paper connects nonpositive sectional curvature of a Riemannian manifold with the displacement convexity of the variance functional on the space $P(M)$ of probability measures over $M$.  We show that $M$ has nonpositive sectional curvature and has trivial topology (i.e, is homeomorphic to $\R^n$) if and only if the variance functional on $P(M)$ is displacement convex.
This is followed by a Jensen type inequality  for the variance functional with respect to Wasserstein barycenters, as well as by a  result comparing the variance of the Wasserstein and linear barycenters of a probability measure on $P(M)$ (that is, an element of $P(P(M))$). These results are applied to invariant measures under isometry group actions, giving a comparison for the variance functional between the Wasserstein projection and the $L^2$ projection to the set of invariant measures.
\end{abstract}

\section{Introduction}
In this paper, we study the influence of nonpositive sectional curvature of a complete Riemannian manifold $M$ on the geometry of the space $P(M)$ of probability measures,  equipped with the Wasserstein metric.
 
Given a probability  measure $\mu$ on $M$, the variance of $\mu$
is defined by

\begin{equation*}
\var(\mu): =\inf_{y \in M}\int_Md^2(x,y)d\mu(x),
\end{equation*}
where $d$ denotes the Riemannian distance.  A minimizer $y\in M$ in the above is often called a \emph{barycenter} of $\mu$. We are interested in the way that the variance, viewed as a functional on the space $P(M)$ of Borel probability measures on $M$, interacts with the geometry on $P(M)$  induced by the Wasserstein distance; the Wasserstein distance between $\mu,\nu \in M$ is given by

\begin{equation}\label{wassdef}
 W_2(\mu, \nu) := \inf_{\pi^1_\#\gamma =\mu, \pi^2_\#\gamma =\nu} \int_{M \times M}d^2(x,y)d\gamma(x,y),
\end{equation}
where, for $i=1,2$, $\pi^i_\#\gamma $ denotes the pushforward of $\gamma$ by the canonical projections, $\pi^1(x,y) =x$,  $\pi^2(x,y) =y$, respectively.  Recall that in general, the pushforward  $T_\#\sigma$ of a measure $\sigma$ by a map $T:X \rightarrow Y$, is defined by $T_\#\sigma(A): =\sigma( T^{-1}(A)) $ for all measurable sets $A \subset Y$.

We will show that the combination of  nonpositive sectional curvature together with  trivial topology,  is characterized by displacement convexity of the variance; that is, convexity along geodesics on $P(M)$  induced by the Wasserstein metric (see Theorem \ref{vardispcon} below).  The notion of displacement interpolation, initiated by McCann \cite{m}, gives a natural geometric  way to interpolate between two probability measures. 
 In turn, convexity of certain functionals with respect to this interpolation, known as displacement convexity, has proven to be a remarkably powerful tool in proving geometric and functional inequalities, and has found applications in physics and economics as well; see, e.g. \cite{V, V2}.

 Let us note that there are already many known characterizations of nonpositive sectional curvature; in fact there is one  involving the variance functional, due to Sturm \cite[Theorem 4.9]{sturm}, which applies to more general spaces than we consider here.  
We believe, however, that it is interesting to have a characterization involving displacement convexity, particularly in light of the now well known characterization of Ricci curvature bounds involving displacement convexity of the entropy functional, developed by many authors, including Cordero-Erausquin-McCann-Schmuckenschlager\cite{c-ems}, Otto-Villani\cite{ottovillani} and Sturm-Von-Renesse\cite{sturmvonrenesse}, and culminating in the recent work of Lott-Villani \cite{lottvillani} and Sturm \cite{sturm06,sturm06a}

Note that, unlike many other interesting displacement convex functionals, the variance functional is well defined and finite as soon as the measure has finite second moment (ie, one does not require absolute continuity with respect to volume), and is weak-* continuous.  This property makes it particularly well suited for studying sectional curvature bounds.  Heuristically, displacement interpolation moves a measures along a family of non-intersecting geodesics with fixed endpoints.   Nonnegative Ricci curvature tends to pull those geodesics apart at intermediate times; this is quantified by the displacement convexity of the entropy in the works cited above.  Our setting is slightly different; we expect nonpositive sectional curvature to contract geodesics at intermediate times in a certain sense. However, as sectional curvature is a property of two dimensional sections of the tangent space, this contraction may not be detectable by functionals which are finite only on absolutely continuous measures.  For instance, if the sectional curvature of some section is positive, but the Ricci curvature is everywhere negative, the volume of a small ball will get contracted.   However, a set which is concentrated and interpolated along the directions with positive sectional curvature can get spread out in a certain sense; the variance turns out to be an appropriate way to quantify this.

We go on to extend the convexity of the variance to convexity with respect to Wasserstein barycenters: see Theorem~\ref{thm: reduction at barycenter}.   Analagously to the definition of barycenters of measures on $M$, a barycenter $BC^W(\Omega)$ of a measure $\Omega$ on $P(M)$, which we call {\em a Wasserstein barycenter, or simply, $W_2$-barycenter, of $\Omega$} is defined as a minimizer of 

\begin{equation}\label{bcdef}
\nu \mapsto \int_{P(M)} W^2_2(\mu,\nu)d\Omega(\mu).
\end{equation}
The notion of Wasserstein barycenters  was considered by Agueh-Carlier \cite{ac} when $M$ is a subset in the Euclidean space $M\subseteq \mathbb{R}^n$ and $\Omega$ is a discrete measure on $P(M)$, and later  by the present authors \cite{KP2} for Riemannian manifolds $M$ and general probability measures $\Omega$ on $P(M)$. It extends displacement interpolation, allowing one to interpolate between several (or, in our formulation, even infinitely many) probability measures in a canonical way.  Agueh and Carlier \cite{ac} also considered convexity over  Wasserstein barycenters, as a generalization of  displacement convexity. This notion can be interpreted as an analogue of Jensen's inequality;  this  point of view was  investigated in \cite{KP2} where geometric versions of Jensen's inequality were established for displacement convex functionals on  Wasserstein spaces over Riemannian manifolds, extending the Euclidean results of \cite{ac}.

The displacement convexity of the variance should be contrasted with its behaviour with respect to  linear interpolation of measures.  When measures are interpolated linearly, it is easy to see that the variance is concave, regardless of the curvature of $M$.  Combined with the ordinary Jensen's inequality and our displacement convexity result, this implies that the variance of the Wasserstein barycenter of any measure $\Omega$ on $P(M)$ is less than or equal to the variance of its linear barycenter, if $M$ is nonpositively curved simply connected space; see Corollary~\ref{coro: var comparison}. Although this statement is not explicitly linked to convexity and concavity, we are not aware of another proof  which does not use convexity over Wasserstein barycenters.
We present a counterexample demonstrating that this inequality can fail when the curvature conditions are relaxed.

We then turn our attention to the special case when the measure $\Omega$ is induced by a left invariant measure on an  isometry group $G$  acting on $M$, and relate our work to  
the $W_2$ projection $P^W_G(\mu)$ of $\mu \in P(M)$ to the set of $G$-invariant measures on $M$. Connections between optimal transport problems and measures which are invariant under certain operations have recently begun to attract considerable attention; see \cite{momeni14}\cite{Zaev}\cite{GhM}\cite{GG}\cite{GhMa}, although  these works are primarily concerned with finding  Kantorovich solutions of the optimal transport problem with certain symmetry constraints, rather than looking at Wasserstein projections.
Our work here implies a comparison result  for the variance functional between the $L^2$ projection and the $W_2$ projection to the $G$ -invariant set. Namely, when $M$ is nonpositively curved and simply connected,
 we get, under suitable conditions on $\mu$,
 \begin{equation*}\label{varineq}
\var(P^{W}_G(\mu)) \leq \var(\mu);
\end{equation*} 
see  Corollary~\ref{coro: variance reduction projection}. 
Note that, at first glance, this inequality has no obvious connection to the barycenter of a family of measures, but we are not  aware of another simple proof of it.  
Furthermore, it is interesting when contrasted with the inequality
\begin{equation*}
\var(P^{L^2}_G(\mu)) \geq \var(\mu),
\end{equation*} 
for the $L^2$ projection $P^{L^2}_G (\mu)$ of $\mu$ onto the $G$-invariant set; see \eqref{eq: comparison}.

The paper is organized as follows: In Section 2 we establish the equivalence, on complete Riemannian manifolds, between nonpositive sectional curvature, together with simple connectedness, and displacement convexity of the variance.  In Section 3, we show that this displacement convexity extends to convexity over Wasserstein barycenters.   Section 4 is devoted to the comparison of the behaviour of the variance functional  between  linear and Wasserstein barycenters.  Finally, in Section 5, these results are applied to isometry group actions, yielding comparison results for the $L^2$ the $W_2$ projections to the set of invariant measures.

\section{Displacement convexity of the variance and nonpositive sectional curvature}
Before stating the main theorem of this section, we develop some notation.  A well known result of Brenier \cite{bren} and McCann \cite{m3} asserts that if the measure $\mu$ is absolutely continuous with respect to volume  and both $\mu$ and $\nu$ have finite variance, then there  exists a unique minimizer $\gamma$ to the minimization problem \eqref{wassdef}, and furthermore, $\gamma = (Id,F)_\#\mu$, where $F:M \rightarrow M$, is the unique mapping such that $F_\#\mu=\nu$ taking the form $F(x) =\exp_x(-\nabla \phi(x))$, where $\phi:M \rightarrow \mathbb{R}$ is a $\frac{d^2}{2}$-convex function;
 that is, $\phi$ takes the form

$$
\phi(x) =\sup_{y \in M}-\frac{d^2(x,y)}{2}-\phi^c(y)
$$  
for some $\phi^c:M \rightarrow \mathbb{R}$.  The \emph{displacement interpolant} between $\mu$ and $\nu$ is then the map $[0,1] \rightarrow P(M)$ given by $\mu_t=((1-t){\rm Id} + tDu(x))\#\mu$. 
 We note that this notion of displacement interpolation can be extended to non-absolutely continuous measures in $P(M)$; for precise definitions, we refer the reader to the books \cite{V2,ags}.  A functional $\mathcal{F}:P(M) \rightarrow \mathbb{R} \cup \infty$ is called \emph{displacement convex} if the function $t \mapsto \mathcal{F}(\mu_t)$ is convex for every displacement interpolant $\mu_t$.

This section is then devoted to the proof of the following result:
\begin{theorem}\label{vardispcon}
Assume $M$ is simply connected.  Then $M$ has nonpositive sectional curvature if and only if the variance functional is displacement convex.
\end{theorem}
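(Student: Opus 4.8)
The plan is to prove the two implications separately, in each case reducing the statement about arbitrary probability measures to a statement about a finite number of Dirac masses, where the geometry becomes transparent. The key observation is that the variance functional is linear-ish in the following sense: $\var(\mu) = \inf_{y}\int d^2(x,y)\,d\mu(x)$, so displacement convexity of $\var$ along $\mu_t$ amounts to showing that $t\mapsto \inf_y \int d^2(x,y)\,d\mu_t(x)$ is convex. Since an infimum of convex functions need not be convex, but the function $t \mapsto \int d^2(x,y)\,d\mu_t(x)$ for \emph{fixed} $y$ will be the natural object to understand, the heart of the matter is the behaviour of $t\mapsto d^2(\sigma(t),y)$ where $\sigma$ is a geodesic in $M$: on a simply connected nonpositively curved manifold (a Hadamard manifold), $x\mapsto d^2(x,y)$ is a convex function, so $t\mapsto d^2(\sigma(t),y)$ is convex for every $y$. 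Conversely, failure of convexity of $x \mapsto d^2(x,y)$ along some geodesic — which happens precisely when some sectional curvature is positive (given simple connectedness, by comparison theory) — should be promoted to a failure of displacement convexity of $\var$ by choosing $\mu$ suitably concentrated.

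For the forward direction (nonpositive curvature $\Rightarrow$ displacement convexity), I would argue as follows. Let $\mu_t$ be a displacement interpolant between $\mu_0$ and $\mu_1$, realized (after the usual approximation, or directly using the optimal plan) by a measure on the space of constant-speed geodesics: there is $\Gamma \in P(C([0,1],M))$ with $(e_t)_\#\Gamma = \mu_t$, supported on minimizing geodesics, where $e_t$ is evaluation at time $t$. Fix any $y\in M$. Since $M$ is Hadamard, for each geodesic $\sigma$ in the support of $\Gamma$ the function $t\mapsto d^2(\sigma(t),y)$ is convex, hence
\begin{equation*}
\int_M d^2(x,y)\,d\mu_t(x) = \int d^2(\sigma(t),y)\,d\Gamma(\sigma) \le (1-t)\int d^2(\sigma(0),y)\,d\Gamma + t\int d^2(\sigma(1),y)\,d\Gamma,
\end{equation*}
i.e. $t\mapsto \int d^2(x,y)\,d\mu_t(x)$ is convex for each fixed $y$. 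Now $\var(\mu_t)=\inf_y \int d^2(x,y)\,d\mu_t(x)$; this infimum of convex functions is not automatically convex, so here I would instead pick a barycenter $y_0$ of $\mu_0$ and a barycenter $y_1$ of $\mu_1$, let $y(t)$ be the geodesic from $y_0$ to $y_1$, and estimate $\var(\mu_t) \le \int d^2(x,y(t))\,d\mu_t(x)$. Using convexity of $(x,y)\mapsto d^2(x,y)$ on the product Hadamard manifold $M\times M$ along the product geodesic $(\sigma(t),y(t))$, the right-hand side is $\le (1-t)\int d^2(\sigma(0),y_0)\,d\Gamma + t\int d^2(\sigma(1),y_1)\,d\Gamma = (1-t)\var(\mu_0) + t\var(\mu_1)$, which is exactly displacement convexity. (One must check the barycenters exist and are attained; this uses completeness, nonpositive curvature, and finiteness of the second moments, and the convexity of $y\mapsto \int d^2(x,y)d\mu$ makes the barycenter essentially unique.)

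For the converse (displacement convexity $\Rightarrow$ nonpositive curvature), I would argue by contrapositive: suppose some sectional curvature is positive at a point $p$, in the plane spanned by orthonormal $u,v\in T_pM$. Using the second variation of energy / Jacobi field estimates, or directly Toponogov-type comparison, I would find a short geodesic $\sigma:[0,1]\to M$ through $p$ and a point $y$ off to the side such that $t\mapsto d^2(\sigma(t),y)$ is \emph{strictly concave} near $t=1/2$ — this is the standard statement that on a sphere-like region the squared distance function is concave along geodesics crossing near the "equator." Then I would take $\mu_0 = \delta_{\sigma(0)}$ and $\mu_1 = \delta_{\sigma(1)}$; the unique displacement interpolant is $\mu_t = \delta_{\sigma(t)}$ (the geodesic being minimizing, after shrinking), and $\var(\delta_{\sigma(t)}) = 0$ — so that is convex, which is unhelpful. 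The fix is to spread the mass: take $\mu_0 = \tfrac12\delta_{a_0} + \tfrac12\delta_{b_0}$ and $\mu_1 = \tfrac12\delta_{a_1}+\tfrac12\delta_{b_1}$ where the geodesics $a_0\to a_1$ and $b_0\to b_1$ are arranged symmetrically about $p$ so that for $t$ near $0$ and $1$ the barycenter of $\mu_t$ sits near $p$ but at $t=1/2$ the two points $a_{1/2},b_{1/2}$ have moved apart in the positively curved direction, forcing $\var(\mu_{1/2})$ to exceed $\tfrac12(\var(\mu_0)+\var(\mu_1))$. Making the geometry of this configuration precise — choosing the four points and verifying the displacement interpolant is the obvious one and computing the variance gap from the curvature — is the main obstacle, and I would handle it by working in normal coordinates at $p$, using the Taylor expansion $d^2(\exp_p(a),\exp_p(b)) = |a-b|^2 - \tfrac13\langle R(a,b)b,a\rangle + O(|a|^5+|b|^5)$ for the metric, so that the positive curvature term produces the required strict inequality at small scale. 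A cleaner alternative for the converse: appeal to Sturm's characterization cited in the introduction, or to the classical fact that simple connectedness plus convexity of all squared-distance functions along geodesics is equivalent to nonpositive curvature (Cartan–Hadamard), once one shows displacement convexity of $\var$ forces convexity of $x\mapsto d^2(x,y)$ by testing against measures $\mu_0=\delta_{\sigma(0)},\mu_1=\delta_{\sigma(1)}$ shifted infinitesimally — but as noted the Dirac case is degenerate, so the two-point construction really is needed to extract the sectional (as opposed to merely Ricci) information.
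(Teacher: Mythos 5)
Your proposal is correct and follows essentially the same route as the paper: the forward direction rests on the joint geodesic convexity of $d^2$ on a Hadamard manifold (the paper's Lemma~\ref{lem: convexity of distance square}, quoted from Sturm) together with the trick of bounding $\var(\mu_t)$ by $\int d^2(x,y(t))\,d\mu_t(x)$ along the geodesic $y(t)$ joining barycenters of the endpoint measures, which is exactly the mechanism of Theorem~\ref{thm: convex W 2} and Corollary~\ref{coro: convexity var}; the converse uses the same two-point measures $\tfrac12[\delta_{a_0}+\delta_{b_0}]$, $\tfrac12[\delta_{a_1}+\delta_{b_1}]$ whose connecting geodesics spread apart under positive sectional curvature, as in Theorem~\ref{vardison1}. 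The only cosmetic differences are that you phrase the interpolant via a measure on geodesic space rather than the paper's quasi-geodesic pushforward, and that both you and the paper leave the quantitative Jacobi-field/comparison construction of the four-point configuration at the level of a standard but unverified assertion.
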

\begin{proof}
 This follows from Theorem~\ref{vardison1} and Corollary~\ref{coro: convexity var} below.
\end{proof}

\subsection{Displacement convexity of variance: necessary condition}
In this subsection, a standard argument shows that if the variance is displacement convex, then the underlying Riemannian manifolds has to be simply connected and nonpositively curved. 

\begin{theorem}\label{vardison1}
 Let $M$ be a  complete Riemannian manifold. Suppose that variance is displacement convex, i.e. 
 $\var (\mu_t) \le (1-t) \var (\mu_0) + t \var (\mu_1)$ for each displacement interpolation $\mu_t$ of probability measures on $M$. 
 Then, $M$ is simply connected and has nonpositive sectional curvature $K\le 0$. 
\end{theorem}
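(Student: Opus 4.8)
The plan is to establish the contrapositive in two independent pieces: (i) if $M$ is not simply connected, then the variance functional fails to be displacement convex; and (ii) if $M$ has a point where some sectional curvature is positive, then again displacement convexity fails. In both cases I would construct an explicit displacement interpolation $\mu_t$ — supported on just finitely many points, so that the variance is easy to compute directly — along which $t \mapsto \var(\mu_t)$ is strictly concave near some $t$.

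For part (ii), the local obstruction, I would work near a point $p$ and a $2$-plane $\sigma \subset T_pM$ on which the sectional curvature is positive. The idea is to take $\mu_0 = \frac12(\delta_{x_0} + \delta_{x_1})$ and $\mu_1 = \frac12(\delta_{y_0} + \delta_{y_1})$ where the four points sit near $p$ in the directions spanned by $\sigma$, arranged so that the optimal (displacement) coupling matches $x_0 \leftrightarrow y_0$ and $x_1 \leftrightarrow y_1$, and so that the interpolated measure $\mu_t = \frac12(\delta_{z_0(t)} + \delta_{z_1(t)})$, with $z_i(t)$ the constant-speed geodesic from $x_i$ to $y_i$, has its two atoms pulled \emph{apart} at intermediate times — this is exactly the positive-curvature behaviour alluded to in the introduction. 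One computes $\var(\mu_t) = \frac14 d^2(z_0(t), z_1(t))$ (for a symmetric configuration the barycenter is the midpoint), and a second-variation / Jacobi-field computation shows that $t \mapsto d^2(z_0(t), z_1(t))$ is strictly concave at the relevant scale when the curvature of $\sigma$ is positive; rescaling the configuration to be small makes the sign of the curvature the dominant term, as in the standard comparison estimates (cf. the proof that CAT($0$) is equivalent to a convexity property of the squared distance). This contradicts displacement convexity.

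For part (i), the topological obstruction, I would exploit the fact that displacement convexity of the variance in particular implies, taking $\mu_0 = \delta_{x}$ and $\mu_1 = \delta_y$, that along \emph{every} minimizing geodesic $\gamma$ between $x$ and $y$ the function $t \mapsto \var(\delta_{\gamma(t)}) = 0$ — which is vacuous — so instead I would use two-atom measures to detect branching/non-uniqueness of geodesics, or more directly invoke that displacement convexity of the variance forces the squared-distance function to be geodesically convex, which on a complete manifold is well known to force simple connectedness (no closed geodesics, hence by Cartan–Hadamard-type reasoning the manifold is diffeomorphic to $\R^n$). Concretely: if $\pi_1(M) \neq 0$, take a nontrivial closed geodesic loop $c$ based at $p$ of length $2\ell$, let $x = c(0)=c(2\ell)=p$ be approached from both sides; set $\mu_0 = \delta_{c(0)}$, $\mu_1 = \delta_{c(\ell)}$, and note there are (at least) two distinct displacement interpolants — one running along each half of $c$ — whose ``average'' configuration, realized as a genuine displacement interpolant of the measure $\frac12\delta_{c(0)} + \frac12 \delta_{c(0)}$ split to travel both ways, gives a measure $\mu_{1/2}$ with two distinct atoms and hence strictly positive variance, while $\var(\mu_0) = \var(\mu_1) = 0$, violating convexity.

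The main obstacle I anticipate is part (ii): making the Jacobi-field estimate rigorous and uniform, i.e. showing that for a suitably small and symmetric four-point configuration aligned with a positively-curved $2$-plane, the quantity $d^2(z_0(t),z_1(t))$ is genuinely strictly concave at $t = \tfrac12$ (and that the coupling $x_i \leftrightarrow y_i$ is actually the optimal one realizing $W_2$, so that $\mu_t$ really is a displacement interpolant). This requires a careful second-order expansion of the squared distance between two nearby geodesics in terms of the curvature tensor — essentially the content of the classical Rauch-type comparison — together with an argument that the lower-order (flat) terms can be arranged to vanish or be dominated by rescaling. The topological part (i) I expect to be comparatively soft, relying on standard facts: a complete manifold on which $x \mapsto d^2(x,y)$ is convex for every $y$ has no conjugate points and trivial $\pi_1$, so the real work is packaging a clean two-atom displacement interpolant that exhibits the failure of convexity directly, without appealing to those facts as black boxes.
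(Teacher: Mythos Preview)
Your treatment of part (ii) --- the positive-curvature obstruction --- is essentially the paper's argument: two-atom measures $\mu_0=\tfrac12(\delta_{x_0}+\delta_{x_1})$, $\mu_1=\tfrac12(\delta_{y_0}+\delta_{y_1})$ aligned with a positively curved $2$-plane, the interpolant $\mu_t=\tfrac12(\delta_{\gamma_0(t)}+\delta_{\gamma_1(t)})$, and the observation that the two geodesics spread apart so that $\var(\mu_t)=\tfrac14 d^2(\gamma_0(t),\gamma_1(t))$ exceeds the convex combination of the endpoint variances. The paper records exactly the three conditions you anticipate needing (equal endpoint separations, spreading at some intermediate $t$, and the cyclic-monotonicity inequality guaranteeing the straight coupling is optimal), so your concern about verifying optimality and the Jacobi estimate is well placed but routine.

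Part (i), however, has a genuine gap. Your concrete construction takes a nontrivial closed geodesic loop $c$ of length $2\ell$ and asserts that $\delta_{c(0)}$ and $\delta_{c(\ell)}$ admit two distinct displacement interpolants, one along each half of $c$. For that you need both halves to be \emph{minimizing} geodesics from $c(0)$ to $c(\ell)$, i.e.\ $d(c(0),c(\ell))=\ell$; nothing guarantees this --- there may well be a shorter path off the loop. More generally, non-simple-connectedness gives you a nonempty cut locus, but a cut point can arise purely as a first conjugate point rather than as a branching point with multiple minimizing geodesics, and your splitting-a-Dirac argument sees only the latter. Your alternative suggestion (``displacement convexity of the variance forces $d^2(\cdot,y)$ to be geodesically convex, hence no closed geodesics, hence simply connected'') is not correct as stated: variance convexity does not immediately yield convexity of $y\mapsto d^2(x,y)$ along every geodesic, and Cartan--Hadamard-type conclusions need curvature input you do not yet have.

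The paper avoids this by a different two-atom construction. From non-simple-connectedness one gets that the cut locus of every point is nonempty; then one invokes \cite[Proposition~2.5]{c-ems}: whenever $y$ lies in the cut locus of $x$, there is a small $v\in T_xM$ with
\[
d^2(\exp_x v,\,y)+d^2(\exp_x(-v),\,y)-2d^2(x,y)<0,
\]
i.e.\ $d^2(\cdot,y)$ fails semiconvexity at $x$ (this holds whether the cut point is conjugate or branching). Now take $\mu_0=\tfrac12(\delta_y+\delta_{\exp_x v})$ and $\mu_1=\tfrac12(\delta_y+\delta_{\exp_x(-v)})$: one atom is \emph{fixed} at $y$ while the other traverses the short geodesic through $x$, so $\mu_{1/2}=\tfrac12(\delta_y+\delta_x)$ is a displacement interpolant and the inequality above directly violates convexity of $t\mapsto \var(\mu_t)$. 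Contrary to your expectation, this topological half is where the nontrivial input (the CEMS semiconvexity failure) enters; the curvature half is the softer one.
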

\begin{proof}

 We first tackle the simple connectedness.  The proof is by contradiction; assume  $M$ is not simply connected.  We claim that  this implies that each point $x$ has a nonempty cut locus.  To see this, note that there are homotopically nontrivial loops from $x$ to itself.  Taking an arc-length minimizing sequence of such loops, and noting that each loop in the sequence remains in a compact subset of $M$, we can pass to a convergent subsequence and obtain a geodesic loop from $x$ to itself.  A cut locus point clearly exists along such a loop.  

By \cite[Proposition 2.5]{c-ems}, then, for any $x \in M$, there exists $y \in M$, and a small $v \in T_xM$ such that 
\begin{equation}\label{cutpt}
d^2(\exp_xv,y) +d^2(\exp_x(-v),y)-2d^2(x,y)<0.
\end{equation}
Now, take two measures $\mu_0 =\frac{1}{2}[\delta_y +\delta_ {\exp_xv}]$ and $\mu_1 =\frac{1}{2}[\delta_y +\delta_ {\exp_x(-v)}]$.  The displacement interpolant at $t=\frac{1}{2}$ is clearly $\mu_{1/2} =\frac{1}{2}[\delta_y +\delta_ {x}]$.  Note that the variances of  the doubly supported measures $\mu_0$,  $\mu_1$ and $\mu_{1/2}$ are, respectively, $\frac{1}{2}d^2(\exp_xv,y)$ +$\frac{1}{2}d^2(\exp_x-v,y)$ and $\frac{1}{2}d^2(x,y)$. This contradicts the displacement convexity of the variance.  We note that one could also use this to construct an example with  absolutely  continuous $\mu_0$ and $\mu_1$; observe that weak-* density of absolutely continuous probability measures, the weak-* continuity of the variance functional and stability of the displacement interpolation (these latter two facts are straightforward to prove; see Lemmas \eqref{weak cont} and \eqref{bc convergence} in the next section), combined with inequality \eqref{cutpt}, we can find absolutely continuous measures $\mu_0$ and $\mu_1$, whose displacement interpolant $\mu_{1/2}$ satisfies
$$
\var(\mu_1) +\var(\mu_0) < 2\var(\mu_{1/2}).
$$
This again violates the displacement convexity of the variance, yielding the desired contradiction and therefore establishing the simple connectedness of $M$.

We now turn to the sectional curvature assertion.  The proof is again by contradiction;  assume     a section $\Sigma$ of a tangent space $T_xM$  has positive sectional curvature.  Then, we can find, for  some small $\epsilon>0$,  points $x_0,x_1,y_0,y_1$ with the following properties:

\begin{eqnarray*}
d(x_0,x_1)& =&d(y_0,y_1) :=\epsilon\\
d(\gamma_0(t),\gamma_1(t)) &>&\epsilon \text{ for some $t \in (0,1)$}\\
d^2(x_0, y_0) +d^2(x_1, y_1) &\leq& d^2(x_0, y_1) +d^2(x_1, y_0) 
\end{eqnarray*}
Here $\gamma_0(t)$ and $\gamma_1(t)$ are geodesics from $x_0$ to $y_0$ and $x_1$ to $y_1$, respectively.  Now, consider optimal transport between the two measures $\mu_0=\frac{1}{2}[\delta_{x_0} +\delta_{x_1}]$ and $\mu_1=\frac{1}{2}[\delta_{y_0} +\delta_{y_1}]$; the optimal plan clearly pairs $x_0$ with $y_0$ and $x_1$ with $y_1$, and so the displacement interpolant at $t$ is  $\mu_{t}=\frac{1}{2}[\delta_{\gamma_{0}(t)} +\delta_{\gamma_{1}(t)}]$.  Therefore, the variances of $\mu_0,\mu_1$ and $\mu_{1/2}$ are, respectively, $\frac{1}{4}d^2(x_0,x_1)$, $\frac{1}{4}d^2(y_0,y_1)$ and $\frac{1}{4}d^2(\gamma_{0}(t),\gamma_{1}(t))$, and so

\begin{eqnarray*}
\var(\mu_{t}) =\frac{d^2(\gamma_0(t),\gamma_1(t))}{4}&>&\frac{\epsilon^2}{4}\\
&=&(1-t)\frac{d^2(x_0,x_1)}{4}+t\frac{d^2(y_0,y_1)}{4}\\
&=&(1-t)\var(\mu_0)+t\var(\mu_1).
\end{eqnarray*}
This contradicts displacement convexity; if one wants to consider absolutely continuous measures instead, then,  one can argue as before using an approximation argument.   
This contradiction yields the desired result.
\end{proof}

\subsection{Displacement convexity of variance: sufficient condition}

The goal of this subsection is to show that variance, as a functional on the space of probability measures, is displacement convex if the underlying  domain or manifold $M$ is simply connected and has nonpositive curvature;  if our domain is not complete, then we further assume  that it is geodesically convex. Here, by geodesic convexity of $M$, we mean that for any given two points in $M$, any minimizing geodesics connecting  these two points remains in $M$.

In fact, we prove convexity along a slightly more general family of paths than displacement interpolations or equivalently, $W_2$-geodesics  in $P(M)$.  All results in this section are obtained  using standard calculations and results in Riemannian geometry. 

\begin{definition}[$W_2$-quasi-geodesic]
Let $V$ be a measurable vector field defined a.e. on a Riemannian manifold $M$. 
Define for $t \in [0,1]$, a measurable mapping $T_t $ as $T_t (x) = \exp_x ( tV (x))$ for a.e. $x$. 
Then, for each absolutely continuous probability measure $\mu$ on $M$, we call the $1$-parameter family  $\mu_t = T_{t\#}\mu$ a {\em  $W_2$-quasi-geodesic}.
\end{definition}
Notice when $V$ is given by $\nabla \phi$ for some $c$-convex function $\phi$, (see \cite{m3}), $W_2$ quasi-geodesics become $W_2$ geodesics.
It is convenient at this point to observe a simple technical fact, which we won't need in this section but will be used in the proof of Theorem \ref{thm: reduction at barycenter} in the following section.  
\begin{lemma}[A first variation along $W_2$ quasi-geodesic]\label{lem: first variation}
Let $\mu$ be an absolutely continuous probability measure on $M$ and let $\mu_t=T_{t\#} \mu$ be a $W_2$ quasi-geodsic, given by the measurable vector field $V$, $T_t (x) = \exp_x t V(x) $ for a.e. $x$. 
Let $\gamma(t)$ be a differentiable curve  in $M$ such that $\gamma(0)$ is a barycenter of $\mu$.  Then
\begin{align*}
 \frac{d}{dt}\Big|_{t=0} W_2^2 (\delta_{\gamma(t)}, \mu(t)) 
 = -2 \int_M  \langle \exp^{-1}_x \left(\gamma(0)\right), V(x) \rangle d\mu(x) 
\end{align*}
where $\langle \cdot, \cdot \rangle$ is the Riemannian inner product. Notice that $\exp_x^{-1}(\gamma(0))$ is defined whenever $x$ is not in the cut-locus of $\gamma(0)$,  which is almost every $x$, and thus $\mu$-a.e. for the absolutely continuous measure $\mu$. 
\end{lemma}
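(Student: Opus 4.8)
The plan is to compute the derivative directly from the definition of $W_2^2(\delta_{\gamma(t)},\mu_t)$. The key simplification is that transporting any measure to a Dirac mass $\delta_p$ leaves no choice of coupling: the only admissible plan is $\mu_t \otimes \delta_p$, so
\[
W_2^2(\delta_{\gamma(t)},\mu_t) = \int_M d^2\bigl(y,\gamma(t)\bigr)\, d\mu_t(y) = \int_M d^2\bigl(T_t(x),\gamma(t)\bigr)\, d\mu(x),
\]
using $\mu_t = T_{t\#}\mu$ and the change of variables $y = T_t(x) = \exp_x(tV(x))$. So the problem reduces to differentiating under the integral sign the function $t \mapsto d^2(\exp_x(tV(x)),\gamma(t))$ for $\mu$-a.e.\ $x$, then evaluating at $t=0$.

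Next I would handle the pointwise differentiation. Fix $x$ not in the cut locus of $\gamma(0)$ (which is $\mu$-a.e.\ $x$ since $\mu$ is absolutely continuous). The function $h(t) = d^2(\exp_x(tV(x)),\gamma(t))$ is a composition of the smooth curve $t\mapsto (\exp_x(tV(x)),\gamma(t))$ with $d^2(\cdot,\cdot)$, and near $t=0$ the two arguments are not conjugate/cut points of one another, so $d^2$ is smooth there. By the standard first-variation formula for the squared distance, $\nabla_p d^2(p,q) = -2\exp_p^{-1}(q)$ and $\nabla_q d^2(p,q) = -2\exp_q^{-1}(p)$. Applying the chain rule with $p(t) = \exp_x(tV(x))$, so $p(0)=x$ and $p'(0) = V(x)$, and with $q(t) = \gamma(t)$, so $q(0) = \gamma(0)$ and $q'(0) = \gamma'(0)$, gives
\[
h'(0) = \langle -2\exp_x^{-1}(\gamma(0)), V(x)\rangle + \langle -2\exp_{\gamma(0)}^{-1}(x), \gamma'(0)\rangle.
\]
Now I would invoke the hypothesis that $\gamma(0)$ is a barycenter of $\mu$: the function $p \mapsto \int_M d^2(x,p)\,d\mu(x)$ is minimized at $p = \gamma(0)$, and (since $\gamma(0)$ is a.e.\ not in the cut locus of the support, or by a subdifferential argument) its first variation vanishes there, i.e.\ $\int_M \exp_{\gamma(0)}^{-1}(x)\,d\mu(x) = 0$. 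Hence, after integrating $h'(0)$ against $\mu$, the second term contributes $\langle -2\int_M \exp_{\gamma(0)}^{-1}(x)\,d\mu(x), \gamma'(0)\rangle = 0$, leaving exactly $-2\int_M \langle \exp_x^{-1}(\gamma(0)), V(x)\rangle\,d\mu(x)$, which is the claimed formula.

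The main obstacle is justifying the interchange of $\frac{d}{dt}$ and $\int_M$, i.e.\ producing an integrable dominating function for the difference quotients of $h(t;x)$ near $t=0$ uniformly in $x$ — this is where the integrability/moment hypotheses on $\mu$ (and some control on $V$, e.g.\ local boundedness or an $L^2(\mu)$ bound) must be used, via a bound like $|h(t;x)-h(0;x)| \le C(|t|\,|V(x)|\,(d(x,\gamma(0)) + |t||V(x)|) + |t|\,|\gamma'(0)|\,d(x,\gamma(0)))$ coming from the triangle inequality $|d(\exp_x(tV(x)),\gamma(t)) - d(x,\gamma(0))| \le d(\exp_x(tV(x)),x) + d(\gamma(t),\gamma(0)) \le |t||V(x)| + C|t|$ and the identity $|a^2 - b^2| = |a-b||a+b|$. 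A secondary technical point is the vanishing of the first variation of the barycenter functional at $\gamma(0)$; this is classical but should be stated carefully since $\gamma(0)$ might a priori lie in the cut locus of a positive-measure set — however, for $\mu$ absolutely continuous this set is $\mu$-null, so the argument goes through. I would also remark that the formula is exactly what makes the quantity $W_2^2(\delta_{\gamma(t)},\mu_t)$ amenable to the convexity estimates needed in Theorem~\ref{thm: reduction at barycenter}.
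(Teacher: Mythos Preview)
Your proposal is correct and follows essentially the same approach as the paper: write $W_2^2(\delta_{\gamma(t)},\mu_t)$ as $\int_M d^2(T_t(x),\gamma(t))\,d\mu(x)$, differentiate under the integral sign using the chain rule and the gradient formula $\nabla_p d^2(p,q)=-2\exp_p^{-1}(q)$, and eliminate the $\gamma'(0)$ term via the first-order barycenter condition. Your version is in fact more explicit than the paper's about the domination needed to justify the interchange of derivative and integral, and about the cut-locus technicalities; the paper simply invokes absolute continuity of $\mu$ to assert that non-differentiability points do not affect the calculation.
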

\begin{proof}
First note that
 \begin{equation}\label{dist squared derivative}
\nabla_x  \frac{1}{2}d^2 (w, x) = -\exp_x^{-1} (w) 
\end{equation}
for a.e. $x$.  We then have

\begin{align*}
  \frac{d}{dt}\Big|_{t=0} W_2^2 (\delta_{\gamma(t)}, \mu_t ) 
 &= \int_M \frac{d}{dt}\Big|_{t=0}  d^2 ( \gamma(t), T_t (x)) d\mu (x)\\
 &= \int_M \langle \nabla_w\Big|_{w=\gamma(0)}  d^2 ( w, T_0 (x)), \gamma'(0)\rangle d \mu (x) \\
 &+  \int_M  \langle \nabla_y\Big|_{y= T_0 (x)}  d^2 ( \gamma(0), y),T_0'(x)\rangle d \mu (x)    \\
&= \Big \langle \int_M \nabla_w\Big|_{w=\gamma(0)}  d^2 ( w, T_0 (x))  d \mu (x), \gamma'(0)\Big\rangle  \\
 &+ \int_M   \langle \nabla_y\Big|_{y= T_0 (x)}  d^2 ( \gamma(0), y),T_0'(x)\rangle  d \mu (x)    
\end{align*}
 Note that in the calculations both above and below, we use the absolute continuity of the measure $\mu$ so that the non differentiability points of the distance squared do not effect the calculations. 
Now, 
$$
\int_M \nabla_w\Big|_{w=\gamma(0)}  d^2 ( w, T_0 (x))  d \mu (x) = \nabla_w\Big|_{w=\gamma(0)}\int_M  d^2 ( w, x)  d \mu (x) 
$$
vanishes because $\gamma(0)$ is a barycenter of $\mu$.  The result now follows from \eqref{dist squared derivative} and the observation that $T_0'(x) =V(x)$. 
\end{proof}

Note that convexity along $W_2$ quasi-geodeiscs implies displacement convexity.  Below we will show that the variance functional $\mu \mapsto \var(\mu)$ is convex along $W_2$ quasi-geodesics.

We will need a  simple consequence of the second variation formula of arc-length; the following Lemma is a special case of a result of Sturm \cite[Corollary 2.5]{sturm} and so we omit the proof.
\begin{lemma}[Convexity of distance squared for points along two geodesics]\label{lem: convexity of distance square}
Let $M$ be a simply connected manifold with nonpositive sectional curvature $K\le 0$.
Let $z, w: [0,1] \to M$ be two  geodesics.
Then
$t  \mapsto d^2(z(t), w(t))$ is convex.
\end{lemma}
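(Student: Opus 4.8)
The plan is to realize $f(t):=d^{2}(z(t),w(t))$ as (twice) the energy of the minimizing geodesic joining $z(t)$ to $w(t)$, and then differentiate twice using the first and second variation formulas for energy. First I would use the hypotheses to set up a good variation: since $M$ is simply connected with $K\le 0$ (and geodesically convex, if incomplete), the Cartan--Hadamard theorem — or, more bare-handedly, the absence of conjugate points together with the implicit function theorem applied to $\exp$ — guarantees that for each $t$ there is a \emph{unique} minimizing geodesic $s\mapsto\Gamma(s,t)$, $s\in[0,1]$, from $z(t)$ to $w(t)$, lying in $M$, and that $\Gamma$ depends smoothly on $(s,t)$ (explicitly $\Gamma(s,t)=\exp_{z(t)}\!\big(s\exp_{z(t)}^{-1}(w(t))\big)$). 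Since $\Gamma(\cdot,t)$ has constant speed $d(z(t),w(t))$, one has the identity $f(t)=\int_{0}^{1}|\partial_{s}\Gamma(s,t)|^{2}\,ds$, which I may now differentiate term by term.

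Next I would differentiate this identity in $t$. Writing $T=\partial_{s}\Gamma$ and using the symmetry $\frac{D}{dt}\partial_{s}\Gamma=\frac{D}{ds}\partial_{t}\Gamma$ of the Levi-Civita connection together with the geodesic equation $\frac{D}{ds}T=0$, an integration by parts in $s$ yields the first variation of energy: $f'(t)=2\langle\dot w(t),T(1,t)\rangle-2\langle\dot z(t),T(0,t)\rangle$. Differentiating once more — and here crucially using that $z$ and $w$ are themselves geodesics, so that $\frac{D}{dt}\dot z=\frac{D}{dt}\dot w=0$ and the boundary terms carrying the accelerations of $z$ and $w$ vanish — I obtain $f''(t)=2\langle J_{t}(1),\frac{D}{ds}J_{t}(1)\rangle-2\langle J_{t}(0),\frac{D}{ds}J_{t}(0)\rangle$, where $J_{t}(s):=\partial_{t}\Gamma(s,t)$ is the Jacobi field along $\Gamma(\cdot,t)$ with $J_{t}(0)=\dot z(t)$ and $J_{t}(1)=\dot w(t)$.

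Then I would convert this boundary expression back into an $s$-integral and invoke the Jacobi equation $\frac{D^{2}}{ds^{2}}J_{t}=-R(J_{t},T)T$, arriving at the index-form representation
\[
f''(t)=2\int_{0}^{1}\Big(\big|\tfrac{D}{ds}J_{t}\big|^{2}-\langle R(J_{t},T)T,J_{t}\rangle\Big)\,ds .
\]
The term $\langle R(J_{t},T)T,J_{t}\rangle$ equals $K(\mathrm{span}\{J_{t},T\})\big(|J_{t}|^{2}|T|^{2}-\langle J_{t},T\rangle^{2}\big)$, which is $\le 0$ by $K\le 0$ and Cauchy--Schwarz; hence the integrand is nonnegative, $f''\ge 0$, and $f$ is convex.

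The two variation-of-energy computations are entirely routine. The one step that genuinely needs the hypotheses, and hence deserves care, is the first one: justifying that the connecting geodesics $\Gamma(\cdot,t)$ exist, are unique, stay in $M$, and vary smoothly in $t$, so that $f$ is even $C^{2}$ — this is exactly where simple connectedness and $K\le 0$ enter (without them $f$ can fail to be differentiable, as the cut-locus computation in Theorem~\ref{vardison1} illustrates). A minor further point is to fix the sign convention for the curvature tensor $R$ consistently, so that "$K\le 0$" does indeed translate into "$\langle R(J,T)T,J\rangle\le 0$."
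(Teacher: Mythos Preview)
Your argument is correct. The paper itself does not supply a proof of this lemma; it simply remarks that the statement is a special case of Sturm's \cite[Corollary~2.5]{sturm} and omits the proof. Your approach---writing $d^{2}(z(t),w(t))$ as the energy of the unique connecting geodesic (available by Cartan--Hadamard), applying the first and second variation formulas, and using the index-form together with $K\le 0$ to conclude $f''\ge 0$---is the standard Riemannian route and is carried out cleanly. The one point worth flagging is that the paper formally assumes completeness of $M$ (or, via Remark~\ref{rem: geodesic convex}, geodesic convexity of a subdomain), which you rightly invoke to guarantee existence, uniqueness, and smooth dependence of the connecting geodesics; without that, the smoothness of $\Gamma$ in $t$ is not automatic.
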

\begin{remark}\label{rem: geodesic convex}
 This results can be easily extended to the case when $M$ is a {\em geodesically convex} domain in a complete Riemannian manifold with nonpositive curvature.
\end{remark}
  
Now, we prove the main theorem of this section:

\begin{theorem}[Convexity  along $W_2$ quasi-geodesics]\label{thm: convex W 2}
 Let $M$ be  a geodesically convex domain in  a complete simply connected manifold with nonpositive sectional curvature $K\le 0$.
 Let $\mu_t$, $a\le t\le b $,  be a $W_2$ quasi-geodesic  in $P(M)$. Let $t \in [a,b] \to w(t) \in M$ be a geodesic. 
 Then, $W_2^2(\delta_{w(t)}, \mu_t) $ is convex in $t$. 
\end{theorem}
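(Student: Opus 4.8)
The plan is to use the fact that, since one of the two measures is a Dirac mass, the Wasserstein distance degenerates into an ordinary integral, and then to recognize that the relevant one-parameter families are geodesics, so that the pointwise convexity statement of Lemma~\ref{lem: convexity of distance square} can simply be integrated.

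First I would record that the only coupling of $\delta_{w(t)}$ with any measure is the product measure, so that
$$
W_2^2(\delta_{w(t)}, \mu_t) = \int_M d^2\big(w(t), y\big)\, d\mu_t(y) = \int_M d^2\big(w(t), T_t(x)\big)\, d\mu(x),
$$
where the last equality is the change of variables $\mu_t = T_{t\#}\mu$. This rewrites the quantity of interest as the integral over $x$, against the fixed measure $\mu$, of the functions $f_x(t) := d^2\big(w(t), T_t(x)\big)$, $t \in [a,b]$.

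Next I would observe that for $\mu$-a.e.\ $x$ the curve $t \mapsto T_t(x) = \exp_x(tV(x))$ is the restriction to $[a,b]$ of a geodesic, which remains in $M$ because $\mu_t$ is a probability measure on $M$ and $M$ is geodesically convex; and $t \mapsto w(t)$ is a geodesic in $M$ by hypothesis. Hence Lemma~\ref{lem: convexity of distance square}, in the geodesically convex form noted in Remark~\ref{rem: geodesic convex}, applies to the pair of geodesics $w(\cdot)$ and $T_\cdot(x)$, showing that $f_x$ is convex on $[a,b]$ for $\mu$-a.e.\ $x$. Finally, for $t = (1-\lambda)t_0 + \lambda t_1$ the pointwise inequality $f_x(t) \le (1-\lambda)f_x(t_0) + \lambda f_x(t_1)$ integrates against $\mu$ (using that each $\mu_t$ has finite second moment, so all the integrals are finite) to give the corresponding inequality for $W_2^2(\delta_{w(\cdot)}, \mu_\cdot)$, which is exactly the asserted convexity.

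The argument is essentially immediate once the reduction is made; the only points needing a little care — and these are the main, though minor, obstacles — are the measure-theoretic bookkeeping needed to pass from the ``a.e.\ $x$ for each $t$'' form of the quasi-geodesic definition to ``for a.e.\ $x$, the whole segment $\{\exp_x(tV(x)) : t\in[a,b]\}$ lies in $M$'' (one can handle this via a countable dense set of times together with continuity in $t$), and the verification that this segment indeed stays inside $M$, which is precisely what geodesic convexity of $M$ guarantees and what Remark~\ref{rem: geodesic convex} is for.
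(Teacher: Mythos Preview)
Your proof is correct and follows essentially the same route as the paper: rewrite $W_2^2(\delta_{w(t)},\mu_t)$ as $\int_M d^2(w(t),T_t(x))\,d\mu(x)$, observe that for fixed $x$ both $t\mapsto w(t)$ and $t\mapsto T_t(x)$ are geodesics so Lemma~\ref{lem: convexity of distance square} (via Remark~\ref{rem: geodesic convex}) gives pointwise convexity, then integrate. The only cosmetic difference is that the paper phrases the last step as $\frac{d^2}{dt^2}\int = \int \frac{d^2}{dt^2}\ge 0$, whereas you integrate the secant inequality directly; your version is arguably cleaner since it sidesteps any justification of differentiating under the integral, and your remarks on the ``a.e.\ $x$ for all $t$'' issue and on why the segment stays in $M$ are more careful than what the paper writes.
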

\begin{proof}
 This is an easy corollary  of Lemma~\ref{lem: convexity of distance square} and Remark~\ref{rem: geodesic convex}. The details follow.
Note that $\mu_t = T_{t\#}\mu$ where   for a.e. $x$, $T_t(x) = \exp_x t V (x)$ for some vector field $V$ on $M$. 

We now observe
\begin{align*}
W_2^2 (\delta_{w(t)}, \mu_t) =  \int_M d^2 (w(t), z) d\mu_t (z) = \int_M d^2 (w(t), T_t (x)) d\mu_0 (x).
\end{align*}
where the first equality is from the definition of $W_2$ distance and the second equality is from 
 $\mu_t = T_{t\#} \mu_0$.
 Therefore,
\begin{align*}
 \frac{d^2}{dt^2} W_2^2(\delta_{w(t)}, \mu_t) = \int_M \frac{d^2}{dt^2} d^2 (w(t), T_t (x))  d\mu_0(x).
\end{align*}
Now, note that for a fixed $x$, $t \in [a,b] \to T_t(x)$ is a geodesic, and so using Lemma~\ref{lem: convexity of distance square}  and Remark~\ref{rem: geodesic convex}, we see 
$\frac{d^2}{dt^2} d^2 (w(t), T_t (x)) \geq 0$.  Thus,
\begin{align*}
  \frac{d^2}{dt^2} W_2^2(\delta_{w(t)}, \mu_t)  \ge 0.
\end{align*}
This completes the proof. 
\end{proof}

From Theorem~\ref{thm: convex W 2}, convexity of the variance follows immediately.  The following corollary, together with Theorem \ref{vardison1} establishes Theorem \ref{vardispcon}.
\begin{corollary}[Convexity of variance along $W_2$ quasi-geodesics]\label{coro: convexity var}
 Adopt the notation and assumptions of Theorem~\ref{thm: convex W 2}.
 Then  $\var (\mu_t)$ is convex in $t$. 
\end{corollary}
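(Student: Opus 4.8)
The plan is to deduce convexity of $t \mapsto \var(\mu_t)$ directly from Theorem~\ref{thm: convex W 2} by choosing the comparison geodesic $w(t)$ appropriately and using the variational characterization of variance. Recall that for any $\mu \in P(M)$ with finite second moment, $\var(\mu) = \inf_{y \in M} W_2^2(\delta_y, \mu) = \inf_{y \in M}\int_M d^2(x,y)\,d\mu(x)$. So $\var$ is, by definition, the pointwise infimum over $y$ of the functions $\mu \mapsto W_2^2(\delta_y, \mu)$.

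First I would fix three parameters $s_0, s_1 \in [a,b]$ and $\lambda \in [0,1]$ with $s_\lambda := (1-\lambda)s_0 + \lambda s_1$, and let $y_0$ be a barycenter of $\mu_{s_0}$ and $y_1$ a barycenter of $\mu_{s_1}$ (existence of barycenters follows from properness/lower semicontinuity of $y \mapsto \int d^2(x,y)\,d\mu(x)$ together with completeness; alternatively one works with near-minimizers and takes a limit). Since $M$ is simply connected with nonpositive curvature, any two points are joined by a unique minimizing geodesic, so let $t \mapsto w(t)$ be the geodesic with $w(s_0) = y_0$ and $w(s_1) = y_1$ (after an affine reparametrization of $[s_0,s_1]$ onto $[0,1]$, or simply noting Lemma~\ref{lem: convexity of distance square} applies to geodesics defined on any interval). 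By Theorem~\ref{thm: convex W 2}, the function $g(t) := W_2^2(\delta_{w(t)}, \mu_t)$ is convex on $[s_0,s_1]$. Hence
\begin{align*}
 \var(\mu_{s_\lambda}) \le W_2^2(\delta_{w(s_\lambda)}, \mu_{s_\lambda}) = g(s_\lambda) &\le (1-\lambda) g(s_0) + \lambda g(s_1) \\
 &= (1-\lambda) W_2^2(\delta_{y_0}, \mu_{s_0}) + \lambda W_2^2(\delta_{y_1}, \mu_{s_1}) \\
 &= (1-\lambda)\var(\mu_{s_0}) + \lambda \var(\mu_{s_1}),
\end{align*}
where the first inequality uses the definition of variance as an infimum, and the last equality uses that $y_i$ is a barycenter of $\mu_{s_i}$. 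Since $s_0, s_1, \lambda$ were arbitrary, $\var(\mu_t)$ is convex on $[a,b]$.

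There is essentially no hard step here; the content is entirely in Theorem~\ref{thm: convex W 2}, and the corollary is the standard observation that an infimum of a family of convex functions need not be convex in general, but here we only need the one-sided estimate at the interpolated time, which holds because the comparison geodesic $w(t)$ can be pinned to the actual barycenters at the two endpoints. The only point requiring a word of care is the existence of barycenters $y_0, y_1$: on a complete manifold this is standard, and if $M$ is merely a geodesically convex domain one can instead use $\varepsilon$-minimizers $y_i^\varepsilon$, run the same argument to get $\var(\mu_{s_\lambda}) \le (1-\lambda)(\var(\mu_{s_0}) + \varepsilon) + \lambda(\var(\mu_{s_1}) + \varepsilon)$, and let $\varepsilon \to 0$. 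I would also remark that convexity along all $W_2$ quasi-geodesics in particular gives convexity along $W_2$-geodesics, i.e. displacement convexity, as noted in the text preceding the theorem, which is what feeds into Theorem~\ref{vardispcon}.
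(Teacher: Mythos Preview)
Your proof is correct and follows essentially the same approach as the paper: pin the comparison geodesic $w(t)$ at barycenters of the endpoint measures, invoke Theorem~\ref{thm: convex W 2} to get convexity of $t \mapsto W_2^2(\delta_{w(t)},\mu_t)$, and then use that the variance is bounded above by this quantity with equality at the endpoints. The paper's proof is terser but identical in substance; your additional remarks on existence of barycenters and the $\varepsilon$-minimizer workaround are reasonable elaborations.
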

\begin{proof}
  For each interval $[\alpha, \beta] \subset [a,b]$,  choose a geodesic $ t \in [\alpha, \beta] \to M$ with $w(\alpha)$, $w(\beta)$ being the barycenter points of $\mu_\alpha$, $\mu_\beta$, respectively. Apply Theorem~\ref{thm: convex W 2} to get convexity of $W_2^2 (\delta_{w(t)}, \mu_t)$,  and note that
 $\var (\mu_t) \le W_2^2 (\delta_{w(t)}, \mu_t)$, with equality at $\alpha$ and $\beta$.
 This establishes the convexity of $\var (\mu_t)$ in $t$. 
\end{proof}
\section{Convexity of the variance functional with respect to $W_2$ barycenters.}
In this section, we use convexity along $W_2$ quasi-geodesics to prove a convexity result with respect to $W_2$ barycenters (see Theorem \ref{thm: reduction at barycenter} below); recall that $W_2$  barycenters were defined in \eqref{bcdef}.  Note that Theorem \ref{thm: reduction at barycenter} requires no regularity (ie, absolute continuity) of the measures $\mu$ in $spt(\Omega)$.  Under suitable regularity conditions on $\Omega$ (see, for example, case 1 of the proof), the argument is a  straightforward variant of the proof of a similar result (for different displacement convex functionals) in \cite{KP2}.  Much of the work in this section is related to the extension to singular measures (in which case the barycenter itself can be singular and non unique).

Throughout this section, we will assume that $M$ is a compact domain in a Riemannian manifold.  Existence of a $W_2$ barycenter of a probability measure $\Omega$ on $P(M)$ is easy to show.  The $W_2$ barycenter is not generally unique; that is,  there may be multiple minimizers in \eqref{bcdef}.  However, uniqueness is known under a mild structural condition on $\Omega$:
\begin{proposition}\label{uniquebc}
Assume $M$ is compact (e.g. a compact domain in a manifold) and that $\Omega(P_{ac}(M)) > 0$.  Then there exists a unique $W_2$ barycenter.
\end{proposition}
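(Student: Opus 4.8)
The plan is to prove existence and uniqueness separately. For existence, I would use the direct method in the calculus of variations: the functional $\nu \mapsto \int_{P(M)} W_2^2(\mu,\nu)\, d\Omega(\mu)$ is continuous with respect to weak-* convergence on $P(M)$ (since $M$ is compact, $W_2$ metrizes weak-* convergence, and the integrand is uniformly bounded by $\mathrm{diam}(M)^2$, so dominated convergence applies), and $P(M)$ is weak-* compact by Prohorov's theorem. Hence a minimizer exists. This part is essentially soft and I would state it briefly, perhaps citing \cite{KP2} where the compact manifold case is already treated.

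For uniqueness, the key is strict convexity of the objective along a suitable family of paths, exploiting the assumption $\Omega(P_{ac}(M)) > 0$. Suppose $\nu_0$ and $\nu_1$ are two distinct $W_2$ barycenters. I would interpolate between them: for each $\mu \in \mathrm{spt}(\Omega)$, there is an optimal plan (or, when $\mu \in P_{ac}(M)$, an optimal map) from $\mu$ to a displacement interpolant. The cleanest route is to use the convexity of $\nu \mapsto W_2^2(\mu, \nu)$ along generalized geodesics in $P(M)$ with base $\mu$: given $\mu$ absolutely continuous, the map $F_i(x) = \exp_x(-\nabla \phi_i(x))$ pushing $\mu$ to $\nu_i$ is unique, and along the path $\nu_t = ((1-t)F_0 + tF_1)_\# \mu$ — the generalized geodesic based at $\mu$ — one has $W_2^2(\mu,\nu_t) \le (1-t)W_2^2(\mu,\nu_0) + tW_2^2(\mu,\nu_1)$, with strict inequality unless $F_0 = F_1$ $\mu$-a.e., i.e. unless $\nu_0 = \nu_1$. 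For the other $\mu$'s (not absolutely continuous) one still gets the non-strict inequality $W_2^2(\mu,\nu_t) \le (1-t)W_2^2(\mu,\nu_0) + tW_2^2(\mu,\nu_1)$ using that $\nu_t$ is an admissible competitor built from $W_2$-optimal plans and that $t \mapsto d^2(\cdot,\cdot)$ along geodesics in a general space still behaves subadditively in the relevant averaged sense (or simply by the glued-plan construction). Integrating against $\Omega$, and using that the strict inequality holds on the set $P_{ac}(M)$ of positive $\Omega$-measure, yields
\[
\int W_2^2(\mu,\nu_t)\, d\Omega(\mu) < (1-t)\int W_2^2(\mu,\nu_0)\, d\Omega(\mu) + t\int W_2^2(\mu,\nu_1)\, d\Omega(\mu),
\]
contradicting that both $\nu_0$ and $\nu_1$ are minimizers.

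One subtlety I would address carefully: the generalized geodesic $\nu_t$ depends on the base point $\mu$, so a priori it is not a single path in $P(M)$ but a family of paths indexed by $\mu$. To make the argument work I need a \emph{common} competitor $\nu_t$ independent of $\mu$. The standard fix is to build $\nu_t$ as the displacement interpolant (ordinary $W_2$-geodesic) between $\nu_0$ and $\nu_1$, or better, to pick one distinguished $\mu^* \in P_{ac}(M)$ with $\mu^* \in \mathrm{spt}(\Omega)$ and use the generalized geodesic based at $\mu^*$; then for that particular $\mu^*$ strict convexity holds, while for every other $\mu$ I only need the non-strict convexity inequality $W_2^2(\mu, \nu_t) \le (1-t) W_2^2(\mu,\nu_0) + t W_2^2(\mu,\nu_1)$, which holds for \emph{any} curve $\nu_t$ obtained by a measurable selection of geodesics between $\nu_0(y)$ and $\nu_1(y)$ along an optimal coupling — this last inequality for arbitrary $\mu$ follows from Jensen/convexity of $d^2$ along geodesics combined with gluing of optimal plans (using nonpositive curvature and Lemma~\ref{lem: convexity of distance square}, though actually the non-strict version holds in full generality without curvature assumptions).

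The main obstacle I anticipate is precisely this technical point about singular base measures: ensuring that the generalized-geodesic competitor $\nu_t$ is well defined as a genuine element of $P(M)$ and that the convexity inequality $W_2^2(\mu,\nu_t) \le (1-t)W_2^2(\mu,\nu_0)+tW_2^2(\mu,\nu_1)$ is valid \emph{simultaneously} for all $\mu \in \mathrm{spt}(\Omega)$ with the single curve $\nu_t$, while the strict inequality is retained on the absolutely continuous part. Handling this cleanly requires either invoking the machinery of generalized geodesics from \cite{ags} or constructing $\nu_t$ explicitly by gluing: take the optimal plan $\gamma_i$ from $\mu^*$ to $\nu_i$, glue along $\mu^*$ to get a measure on $M^3$ with marginals $\mu^*, \nu_0, \nu_1$, and let $\nu_t$ be the pushforward under the map sending $(x, y_0, y_1)$ to the time-$t$ point of the geodesic from $y_0$ to $y_1$. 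Uniqueness of Brenier maps out of the absolutely continuous $\mu^*$ forces $\nu_0 = \nu_1$, completing the proof.
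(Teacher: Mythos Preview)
The paper does not actually prove this proposition; it simply says ``The proof can be found in \cite{P5} and \cite{KP2}.'' So there is no in-paper argument to compare against, only the cited references.

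Your existence argument is fine. The uniqueness argument, however, has a genuine gap that you yourself flag but do not resolve. The generalized geodesic $\nu_t$ you build is based at a single absolutely continuous $\mu^*$, and for \emph{that} $\mu^*$ you do get strict convexity of $t\mapsto W_2^2(\mu^*,\nu_t)$. But for the remaining $\mu$ in the support of $\Omega$ you assert the non-strict inequality $W_2^2(\mu,\nu_t)\le (1-t)W_2^2(\mu,\nu_0)+tW_2^2(\mu,\nu_1)$, and this is where the argument breaks. Your proposed gluing gives a coupling of $\mu$ with $\nu_t$ whose cost is bounded by $(1-t)\int d^2(z,y_0)+t\int d^2(z,y_1)$ only after applying convexity of $d^2(z,\cdot)$ along geodesics, which already requires nonpositive curvature (Proposition~\ref{uniquebc} has no curvature hypothesis). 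Worse, even granting that, the glued coupling has the $(\mu,\nu_0)$-marginal optimal but the $(\mu,\nu_1)$-marginal generally \emph{not} optimal, so the second term is $\ge W_2^2(\mu,\nu_1)$, which is the wrong direction. The generalized-geodesic route therefore does not yield the needed inequality simultaneously for all $\mu$.

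The standard argument (as in the cited references and in Agueh--Carlier) avoids this entirely by using \emph{linear} interpolation of the competitor, not a Wasserstein interpolation: $\nu\mapsto W_2^2(\mu,\nu)$ is convex along $\nu_t=(1-t)\nu_0+t\nu_1$ for every $\mu$ (it is an infimum of linear functionals in $\nu$), with no curvature assumption. Strictness when $\mu$ is absolutely continuous follows because equality forces the mixture $(1-t)\pi_0+t\pi_1$ of optimal plans to be optimal for $(\mu,\nu_t)$, hence induced by a map by Brenier--McCann; both $\pi_0$ and $\pi_1$ then lie on the same graph and have first marginal $\mu$, forcing $\nu_0=\nu_1$. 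Integrating against $\Omega$ and using $\Omega(P_{ac}(M))>0$ gives uniqueness. This is both simpler and curvature-free; your displacement-based strategy is not the natural tool here.
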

The proof can be found in \cite{P5} and \cite{KP2}.  We will also need the following stability result.

\begin{lemma}\label{bc convergence}
Assume $M$ is compact and suppose the probability measures $\Omega^N$ on $P(M)$ converge in the weak-* topology, to $\Omega$ (with respect to the $W_2$-distance on $P(M)$).  Then the limit of any weakly-* convergent subsequence $\bar \mu ^N$ of barycenters  of the $\Omega^N$ is a  barycenter of  $\Omega$.

\end{lemma}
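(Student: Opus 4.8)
The plan is to prove the stability of Wasserstein barycenters by a standard compactness-plus-semicontinuity argument, following the usual $\Gamma$-convergence-style reasoning for minimizers. First I would recall that since $M$ is compact, $P(M)$ is compact in the weak-* topology, and $W_2$ metrizes this topology; consequently $P(P(M))$ is also weak-* compact, so the subsequential limits referred to in the statement exist. Fix a weak-* convergent subsequence $\bar\mu^N \to \bar\mu$ (relabeling, assume the whole sequence converges), where each $\bar\mu^N$ is a barycenter of $\Omega^N$, and let $F_\Omega(\nu) := \int_{P(M)} W_2^2(\mu,\nu)\, d\Omega(\mu)$ denote the barycenter functional; I must show $F_\Omega(\bar\mu) \le F_\Omega(\nu)$ for every $\nu \in P(M)$.

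The argument has two halves. For the \emph{lower bound} (liminf inequality), I would show $F_\Omega(\bar\mu) \le \liminf_N F_{\Omega^N}(\bar\mu^N)$. Here the integrand $(\mu,\nu) \mapsto W_2^2(\mu,\nu)$ is continuous and bounded on the compact set $P(M) \times P(M)$ (bounded by $\operatorname{diam}(M)^2$), so $\nu \mapsto F_\Omega(\nu)$ depends continuously on $\nu$, and moreover $F_{\Omega^N}(\bar\mu^N) \to F_\Omega(\bar\mu)$: write $F_{\Omega^N}(\bar\mu^N) - F_\Omega(\bar\mu) = \big(F_{\Omega^N}(\bar\mu^N) - F_{\Omega^N}(\bar\mu)\big) + \big(F_{\Omega^N}(\bar\mu) - F_\Omega(\bar\mu)\big)$; the first bracket tends to $0$ because $W_2^2(\mu,\cdot)$ is continuous uniformly in $\mu$ (by the triangle inequality and the $\operatorname{diam}(M)$ bound, $|W_2^2(\mu,\bar\mu^N) - W_2^2(\mu,\bar\mu)| \le 2\operatorname{diam}(M)\, W_2(\bar\mu^N,\bar\mu)$) and $\bar\mu^N \to \bar\mu$; the second bracket tends to $0$ because $\mu \mapsto W_2^2(\mu,\bar\mu)$ is a fixed bounded continuous function on $P(M)$ and $\Omega^N \to \Omega$ weak-*. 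For the \emph{upper bound} (recovery sequence / limsup inequality), given an arbitrary competitor $\nu \in P(M)$, optimality of $\bar\mu^N$ for $\Omega^N$ gives $F_{\Omega^N}(\bar\mu^N) \le F_{\Omega^N}(\nu)$, and by the same weak-* convergence argument applied to the fixed function $\mu \mapsto W_2^2(\mu,\nu)$, we get $F_{\Omega^N}(\nu) \to F_\Omega(\nu)$. Combining, $F_\Omega(\bar\mu) = \lim_N F_{\Omega^N}(\bar\mu^N) \le \lim_N F_{\Omega^N}(\nu) = F_\Omega(\nu)$, which is exactly the claim that $\bar\mu$ is a barycenter of $\Omega$.

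I do not expect a serious obstacle here; the compactness of $M$ is doing all the heavy lifting, since it makes $W_2^2$ bounded and jointly continuous, which in turn makes the barycenter functional jointly continuous in the pair $(\Omega,\nu)$ with respect to weak-* convergence — and joint continuity of the functional, together with compactness of the constraint set $P(M)$, immediately yields stability of minimizers. The only point requiring a little care is the uniform-in-$\mu$ continuity estimate $|W_2^2(\mu,\sigma) - W_2^2(\mu,\sigma')| \le 2\operatorname{diam}(M)\, W_2(\sigma,\sigma')$, which follows from $W_2^2(\mu,\sigma) - W_2^2(\mu,\sigma') = (W_2(\mu,\sigma) + W_2(\mu,\sigma'))(W_2(\mu,\sigma) - W_2(\mu,\sigma'))$, the bound $W_2 \le \operatorname{diam}(M)$, and the reverse triangle inequality $|W_2(\mu,\sigma) - W_2(\mu,\sigma')| \le W_2(\sigma,\sigma')$. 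With that in hand the proof is just the two-line $\Gamma$-convergence bookkeeping above. If one wanted to avoid even naming $\Gamma$-convergence, the whole thing can be phrased as: $(\Omega,\nu) \mapsto F_\Omega(\nu)$ is continuous on $P(P(M)) \times P(M)$, hence $\inf_\nu F_{\Omega^N}(\nu) \to \inf_\nu F_\Omega(\nu)$ and any limit of near-minimizers is a minimizer.
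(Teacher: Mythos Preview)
Your proof is correct and follows essentially the same approach as the paper: both arguments combine the optimality of $\bar\mu^N$ for $\Omega^N$, weak-* convergence of $\Omega^N$ against the fixed bounded continuous test functions $\mu \mapsto W_2^2(\mu,\nu)$ and $\mu \mapsto W_2^2(\mu,\bar\mu)$, and a Lipschitz-type estimate for $W_2^2$ coming from the triangle inequality together with the $\diam(M)$ bound. The only cosmetic difference is that the paper expands $(W_2(\mu,\bar\mu^N)+W_2(\bar\mu^N,\bar\mu))^2$ directly, whereas you factor $W_2^2(\mu,\sigma)-W_2^2(\mu,\sigma')$ and invoke the reverse triangle inequality; the content is the same.
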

\begin{proof}
 The proof is a standard argument.
Choose a weakly convergent subsequence, $\bar \mu^N \rightarrow \bar \mu$.   For any $\mu \in P(M)$, we have

\begin{eqnarray}
W_2^2(\mu, \bar \mu)& \leq& \big(W_2(\mu,\bar \mu^N) +W_2(\bar \mu^N, \bar \mu)\big)^2\\
&=&W_2^2(\mu, \bar \mu^N) +W_2^2(\bar \mu^N, \bar \mu) +2W_2(\mu, \bar \mu^N) W_2(\bar \mu^N, \bar \mu)\nonumber
\end{eqnarray}
Integrating against $\Omega^N$, we have

\begin{eqnarray}
\int_{P(M)}W_2^2(\mu, \bar \mu) d\Omega^N(\mu) & \leq\int_{P(M)}W_2^2(\mu, \bar \mu^N) d\Omega^N(\mu)+W_2^2(\bar \mu^N, \bar \mu)\\ 
&+2W_2(\bar \mu^N, \bar \mu)\int_{P(M)}W_2(\mu, \bar \mu^N) d\Omega^N(\mu) \nonumber
\end{eqnarray}
Therefore, for any $\nu \in P(M)$, we have, by definition of the barycenter $\bar \mu^N$,

\begin{align}\label{approxbc}
\int_{P(M)}W_2^2(\mu, \bar \mu) d\Omega^N(\mu)& \le\int_{P(M)} W_2^2(\mu, \nu) d\Omega^N(\mu) +W_2^2(\bar \mu^N, \bar \mu)\\
& +2W_2(\bar \mu^N, \bar \mu) \int_{P(M)}W_2(\mu, \bar \mu^N)  d\Omega^N(\mu)\nonumber
\end{align}

Now,  as  weak-* convergence is equivalent to Wasserstein convergence,   $W_2(\bar \mu^N, \bar \mu)$ tends to zero as  $N \rightarrow \infty$, and as the term $W_2(\mu, \bar \mu^N)$ is uniformly bounded by the compactness of $M$,  the last two terms on the right hand side go to zero.

By weak convergence of the $\Omega^N$, and continuity of $\mu \mapsto W_2^2(\mu, \bar \mu) $ and $\mu \mapsto W_2^2(\mu,\nu) $, the above inequality tends to 
\begin{equation}
\int_{P(M)}W_2^2(\mu, \bar \mu) d\Omega(\mu)\leq\int_{P(M)} W_2^2(\mu, \nu) d\Omega(\mu)
\end{equation}
As $\nu$ is arbitrary, this completes the proof.
\end{proof}
 Another standard argument shows:
\begin{lemma}\label{weak cont}
Assume $M$ is compact.  The mapping $\mu \mapsto \var(\mu)$ is  continuous on $P(M)$ with respect to the weak-* topology.
\end{lemma}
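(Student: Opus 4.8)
The plan is to show weak-* continuity of $\mu \mapsto \var(\mu)$ on the compact space $M$ by exploiting the fact that, since $M$ is compact, the distance function $d$ is bounded and uniformly continuous, and the Wasserstein topology on $P(M)$ coincides with the weak-* topology. First I would recall that $\var(\mu) = \inf_{y \in M} V(\mu,y)$ where $V(\mu, y) := \int_M d^2(x,y)\, d\mu(x)$, and that the infimum is attained at some barycenter $y_\mu \in M$ by compactness of $M$ together with continuity of $y \mapsto V(\mu,y)$ (the latter from uniform continuity of $d^2$ on the compact set $M \times M$).

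Next I would establish joint continuity of $(\mu, y) \mapsto V(\mu, y)$ on $P(M) \times M$, where $P(M)$ carries the weak-* topology. For this, suppose $\mu_n \to \mu$ weak-* and $y_n \to y$ in $M$. Then
\begin{align*}
|V(\mu_n, y_n) - V(\mu, y)| &\leq \left| \int_M d^2(x,y_n) \, d\mu_n(x) - \int_M d^2(x,y)\, d\mu_n(x) \right| \\
&\quad + \left| \int_M d^2(x,y)\, d\mu_n(x) - \int_M d^2(x,y)\, d\mu(x) \right|.
\end{align*}
The first term is bounded by $\sup_{x \in M} |d^2(x,y_n) - d^2(x,y)|$, which tends to $0$ by the uniform continuity of $d^2$ on $M \times M$ and the fact that $y_n \to y$; the second term tends to $0$ because $x \mapsto d^2(x,y)$ is a fixed bounded continuous function and $\mu_n \to \mu$ weak-*. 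Hence $V$ is jointly continuous, and in particular it is continuous and bounded on the compact set $P(M) \times M$ (noting $P(M)$ is weak-* compact since $M$ is compact).

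Finally I would conclude that $\var(\mu) = \min_{y \in M} V(\mu, y)$ is continuous in $\mu$: this is the standard fact that the infimum of a jointly continuous function over a compact parameter set is continuous in the remaining variable. Concretely, if $\mu_n \to \mu$ weak-*, pick barycenters $y_n$ of $\mu_n$ and $y$ of $\mu$; then $\var(\mu_n) = V(\mu_n, y_n) \leq V(\mu_n, y) \to V(\mu, y) = \var(\mu)$ gives $\limsup_n \var(\mu_n) \leq \var(\mu)$, while passing to a subsequence with $y_n \to y_\infty$ (possible by compactness of $M$) gives $\liminf_n \var(\mu_n) = \liminf_n V(\mu_n, y_n) = V(\mu, y_\infty) \geq \var(\mu)$ by joint continuity and the definition of $\var$. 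This yields $\var(\mu_n) \to \var(\mu)$. The only mild subtlety — hardly an obstacle — is making sure the subsequential argument for $\liminf$ is applied along an arbitrary subsequence of $(\var(\mu_n))$ to upgrade the limit inferior statement to full convergence; compactness of $M$ makes this routine.
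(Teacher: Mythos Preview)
Your proof is correct and follows essentially the same approach as the paper: both split $|V(\mu_n,y_n)-V(\mu,y)|$ into a term controlled by the (uniform) continuity of $d^2$ in the $y$-variable and a term handled by weak-* convergence against the fixed test function $d^2(\cdot,y)$, and both use compactness of $M$ to pass to convergent subsequences of barycenters. The only cosmetic difference is that you package the first step as joint continuity of $V$ on $P(M)\times M$ and then invoke the general principle that $\min_{y\in M}V(\cdot,y)$ is continuous, whereas the paper carries out the same estimate explicitly via the factorization $d^2(y,x^N)-d^2(y,x)=[d(y,x^N)+d(y,x)][d(y,x^N)-d(y,x)]$ and the bound $2\,\diam(M)\,d(x,x^N)$.
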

\begin{proof}
Suppose $\mu^N \rightarrow \mu$ in  the weak-* topology.  It is easy (in fact, almost identical to the proof in the preceding Lemma) to show that any convergent subsequence $x^N$ of barycenters of the $\mu^N$ converges to a barycenter $x$ of $\mu$.  We then need to show $\int_M d^2(y,x^N)d\mu^N(y) \rightarrow \int_M d^2(y,x)d\mu(y)$.  We have

\begin{align*}
& |\int_M d^2(y,x^N)d\mu^N(y) - \int_M d^2(y,x)d\mu(y)|\\ &\leq |\int_M d^2(y,x^N)d\mu^N(y) -   \int_M d^2(y,x)d\mu^N(y) |\\
& \quad + | \int_M d^2(y,x)d\mu^N(y)  -   \int_M d^2(y,x)d\mu(y)|
\end{align*}
As $N \rightarrow \infty$, the second term in the right hand side above goes to zero by weak convergence.  The first term can be written as 
\begin{align*}
 &|\int_M[d(y,x^N)+d(y,x)] [d(y,x^N)- d(y,x)]d\mu^N(y) | \\
 &\leq  \int_M|[d(y,x^N)+d(y,x)] [d(y,x^N)- d(y,x)]|d\mu^N(y) \\
&\leq2\diam(M) \int_M|d(y,x^N)- d(y,x)|d\mu^N(y) |\\
&\leq2\diam(M) \int_Md(x,x^N)d\mu^N(y) \\
&=2\diam(M)d(x,x^N)
\end{align*}
The result follows.
\end{proof}

\begin{corollary}\label{approx omega}
Assume $M$ is compact.
Suppose the measures $\Omega^N$ on $P(M)$ converge weakly to $\Omega$.  Then 
\begin{equation*}
\int_{P(M)}\var(\mu)d\Omega^N(\mu) \rightarrow \int_{P(M)}\var(\mu)d\Omega(\mu)
\end{equation*}
\end{corollary}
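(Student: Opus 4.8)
The plan is to deduce Corollary~\ref{approx omega} by combining the weak-* continuity of the variance functional (Lemma~\ref{weak cont}) with the hypothesis that $\Omega^N \to \Omega$ weakly-* as measures on $P(M)$. The statement is precisely the claim that integration against a weakly convergent sequence of measures on $P(M)$ converges, applied to the integrand $\mathcal{F}(\mu) := \var(\mu)$. The only thing to check is that this integrand is a bounded continuous function on $P(M)$, so that weak-* convergence of the $\Omega^N$ directly gives $\int \mathcal{F}\, d\Omega^N \to \int \mathcal{F}\, d\Omega$.

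First I would record that $P(M)$, with the $W_2$-metric (equivalently, by compactness of $M$, the weak-* topology), is itself a compact metric space. Then $\mu \mapsto \var(\mu)$ is continuous on this compact space by Lemma~\ref{weak cont}, hence bounded: indeed $0 \le \var(\mu) = \inf_y \int_M d^2(x,y)\,d\mu(x) \le \diam(M)^2$ for every $\mu \in P(M)$, so boundedness is immediate and does not even require compactness of $P(M)$, only of $M$. Thus $\mathcal{F} \in C_b(P(M))$.

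Second, I would invoke the definition of weak-* convergence of probability measures on the metric space $P(M)$: $\Omega^N \to \Omega$ means $\int_{P(M)} G\, d\Omega^N \to \int_{P(M)} G\, d\Omega$ for every $G \in C_b(P(M))$. Applying this with $G = \mathcal{F}$ yields exactly
\[
\int_{P(M)} \var(\mu)\, d\Omega^N(\mu) \longrightarrow \int_{P(M)} \var(\mu)\, d\Omega(\mu),
\]
which is the assertion. If one prefers to avoid quoting the continuous-function formulation of weak convergence, one can instead mimic the proof of Lemma~\ref{bc convergence}: use that $\var$ is uniformly continuous on the compact space $P(M)$ to approximate it uniformly by Lipschitz-in-$W_2$ functions, for which testing against $\Omega^N$ passes to the limit, and control the error by $\sup|\var - (\text{approximant})|$ together with the fact that $\Omega^N, \Omega$ are probability measures.

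There is essentially no obstacle here; the corollary is a soft consequence of Lemma~\ref{weak cont} and the definition of weak-* convergence on $P(P(M))$. The only mild point worth a sentence is the identification of weak-* convergence with $W_2$-convergence on $P(M)$ (used implicitly so that $C_b(P(M))$ for the $W_2$-topology and for the weak-* topology coincide), which holds because $M$ is compact; this is already used repeatedly in Lemma~\ref{bc convergence}, so it can be cited in the same breath.
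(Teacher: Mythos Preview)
Your proposal is correct and matches the paper's own proof, which simply states that the result is an immediate consequence of the continuity of $\mu \mapsto \var(\mu)$ (Lemma~\ref{weak cont}) and the definition of weak convergence. Your additional observations about the explicit bound $\var(\mu)\le \diam(M)^2$ and the equivalence of weak-* and $W_2$ topologies on $P(M)$ are sound but not strictly needed beyond what the paper already uses.
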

\begin{proof}
This is an immediate consequence of the continuity of $\mu \mapsto \var(\mu)$ (Lemma  \ref{weak cont}) and the definition of weak convergence.
\end{proof}

 Before we prove the main theorem of this section, we make the following observation:
\begin{lemma}\label{nonunique bc}
Suppose $\bar \mu$ is a barycenter of  the measure $\Omega$ on $P(M)$.  
 Then $\bar \mu$ is the unique barycenter of $\frac{1}{2} \delta_{\bar \mu} +\frac{1}{2}\Omega$.
\end{lemma}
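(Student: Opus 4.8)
The plan is to unwind the definition of a barycenter of $\Omega' := \frac{1}{2}\delta_{\bar\mu} + \frac{1}{2}\Omega$ directly, writing the objective functional as a convex combination of the objective for $\Omega$ and the distance-squared to $\bar\mu$, and then showing the unique way to minimize the combination is to take $\nu = \bar\mu$.

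First I would write, for any $\nu \in P(M)$,
\begin{align*}
\int_{P(M)} W_2^2(\mu,\nu)\, d\Omega'(\mu) = \tfrac{1}{2} W_2^2(\bar\mu,\nu) + \tfrac{1}{2}\int_{P(M)} W_2^2(\mu,\nu)\, d\Omega(\mu).
\end{align*}
Since $\bar\mu$ is a barycenter of $\Omega$, the second term is bounded below by $\tfrac{1}{2}\int_{P(M)} W_2^2(\mu,\bar\mu)\, d\Omega(\mu)$, with equality when $\nu = \bar\mu$. The first term is bounded below by $0 = \tfrac{1}{2}W_2^2(\bar\mu,\bar\mu)$, again with equality when $\nu = \bar\mu$. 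Hence $\bar\mu$ is a barycenter of $\Omega'$, and moreover the value of the objective at any competitor $\nu$ exceeds the value at $\bar\mu$ by at least $\tfrac{1}{2}W_2^2(\bar\mu,\nu)$. Therefore if $\nu$ is also a barycenter of $\Omega'$, we must have $W_2^2(\bar\mu,\nu) = 0$, i.e. $\nu = \bar\mu$. This gives uniqueness.

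I do not expect a serious obstacle here: the only point requiring a word of care is that we are using that $W_2^2(\bar\mu,\nu) \geq W_2^2(\bar\mu,\bar\mu) = 0$ with equality iff $\nu = \bar\mu$, which is just the fact that $W_2$ is a genuine metric on $P(M)$ (so that $W_2(\bar\mu,\nu) = 0 \iff \bar\mu = \nu$) — standard on a compact domain $M$. One should also note that the argument does not require the barycenter of $\Omega$ to be unique; it uses only that $\bar\mu$ is \emph{a} barycenter, and concludes that after adding the atom $\tfrac12\delta_{\bar\mu}$, that particular $\bar\mu$ becomes the \emph{unique} one. The slight subtlety worth flagging in the writeup is precisely this asymmetry: other barycenters $\bar\mu'$ of $\Omega$ (if any) are not barycenters of $\Omega'$, which is consistent with the strict inequality $\int W_2^2(\mu,\nu)d\Omega'(\mu) > \int W_2^2(\mu,\bar\mu)d\Omega'(\mu)$ for $\nu \neq \bar\mu$ established above.
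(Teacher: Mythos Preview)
Your argument is correct and is essentially identical to the paper's own proof: both decompose the objective for $\Omega' = \tfrac12\delta_{\bar\mu}+\tfrac12\Omega$ into $\tfrac12 W_2^2(\bar\mu,\nu)+\tfrac12\int W_2^2(\mu,\nu)\,d\Omega(\mu)$, bound each piece below by its value at $\nu=\bar\mu$, and observe that equality forces $W_2(\bar\mu,\nu)=0$. Your additional remark that the excess is at least $\tfrac12 W_2^2(\bar\mu,\nu)$ makes the uniqueness step slightly more explicit than the paper's one-line ``with equality only if $\nu=\bar\mu$,'' but the content is the same.
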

\begin{proof}
For any $\nu$, we have 

\begin{eqnarray*}
\frac{1}{2} W_2^2(\nu,\bar \mu) +\frac{1}{2}\int_{P(M)} W_2^2(\mu,\nu)d\Omega(\mu) &\geq& \frac{1}{2} W_2^2(\bar \mu,\bar \mu) +\frac{1}{2}\int_{P(M)} W_2^2(\mu,\bar \mu)d\Omega(\mu)\\
&=&\frac{1}{2}\int_{P(M)} W_2^2(\mu,\bar \mu)d\Omega(\mu),
\end{eqnarray*}
with equality if only if $\nu =\bar \mu$.
\end{proof}

Now we state and prove the main theorem of this section.
\begin{theorem}[Convexity of variance with respect to the barycenter]\label{thm: reduction at barycenter}
Assume that $M$ is a compact, geodesically convex domain in a complete nonpositively curved manifold: thus, $\mu \mapsto \var(\mu)$ is convex along $W_2$ quasi-geodeiscs. 
Let $\Omega$ be a Borel probability measure on $P(M)$.  Let $\bar \mu \in P(M)$ be a $W_2$ barycenter of $\Omega$.  
Then, we have
\begin{align*}
 \var (\bar \mu) \, \le  \int_ {P(M)} \var ( \mu ) d\Omega( \mu). 
 \end{align*}
 \end{theorem}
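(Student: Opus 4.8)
The plan is to prove the inequality first under a regularity assumption on $\Omega$ and then remove it by the approximation machinery (Lemmas~\ref{bc convergence}, \ref{weak cont} and Corollary~\ref{approx omega}) already established above. So suppose first that $\Omega(P_{ac}(M)) > 0$, so that by Proposition~\ref{uniquebc} the barycenter $\bar\mu$ is unique, and suppose moreover that $\bar\mu$ itself is absolutely continuous (this can be arranged, for instance, by replacing $\Omega$ with $\frac12\delta_{\bar\mu} + \frac12\Omega$ after first approximating; see below). The key structural fact, established in \cite{KP2}, is that a $W_2$ barycenter admits a system of optimal maps: for $\Omega$-a.e.\ $\mu$ there is an optimal map $F_\mu : M \to M$ pushing $\bar\mu$ forward to $\mu$, of the form $F_\mu(x) = \exp_x(-\nabla\phi_\mu(x))$, and these maps satisfy the \emph{barycentric balance condition} $\int_{P(M)} \exp_x^{-1}(F_\mu(x))\, d\Omega(\mu) = 0$ for $\bar\mu$-a.e.\ $x$ — equivalently, $\int_{P(M)} \nabla\phi_\mu(x)\, d\Omega(\mu) = 0$ a.e.

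With this in hand, fix any point $\bar y \in M$ that is a barycenter of $\bar\mu$, and for each $\mu$ consider the $W_2$ quasi-geodesic $t \mapsto \mu_t^\mu := (F_\mu^t)_\#\bar\mu$ where $F_\mu^t(x) = \exp_x(-t\nabla\phi_\mu(x))$, which runs from $\bar\mu$ (at $t=0$) to $\mu$ (at $t=1$). For each such quasi-geodesic pick the constant curve $w(t) \equiv \bar y$ (a degenerate geodesic) and apply Theorem~\ref{thm: convex W 2}: the function $t \mapsto W_2^2(\delta_{\bar y}, \mu_t^\mu)$ is convex, hence lies below the chord, giving
\begin{equation*}
W_2^2(\delta_{\bar y}, \mu_1^\mu) \ge W_2^2(\delta_{\bar y}, \bar\mu) + \frac{d}{dt}\Big|_{t=0} W_2^2(\delta_{\bar y}, \mu_t^\mu).
\end{equation*}
Now integrate this inequality against $d\Omega(\mu)$. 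On the left, $W_2^2(\delta_{\bar y}, \mu) \ge \var(\mu)$, so $\int W_2^2(\delta_{\bar y},\mu)\,d\Omega(\mu) \ge \int \var(\mu)\,d\Omega(\mu)$ — wait, that is the wrong direction; instead I use $W_2^2(\delta_{\bar y}, \mu_1^\mu) = \int_M d^2(\bar y, F_\mu(x))\,d\bar\mu(x)$ directly and will not pass to $\var(\mu)$ here. On the right, the constant term integrates to $W_2^2(\delta_{\bar y}, \bar\mu) = \var(\bar\mu)$ (by the choice of $\bar y$), and the derivative term, by Lemma~\ref{lem: first variation} applied with $V(x) = -\nabla\phi_\mu(x)$ and $\gamma \equiv \bar y$, equals $2\int_M \langle \exp_x^{-1}(\bar y), \nabla\phi_\mu(x)\rangle\, d\bar\mu(x)$; integrating in $\mu$ and swapping the order of integration (Fubini, valid by compactness of $M$) gives $2\int_M \langle \exp_x^{-1}(\bar y), \int_{P(M)}\nabla\phi_\mu(x)\,d\Omega(\mu)\rangle\, d\bar\mu(x) = 0$ by the barycentric balance condition. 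Thus
\begin{equation*}
\int_{P(M)} W_2^2(\delta_{\bar y}, \mu)\, d\Omega(\mu) \ge \var(\bar\mu).
\end{equation*}
It remains to note that, since $\bar\mu$ is a $W_2$ barycenter of $\Omega$, $\int W_2^2(\bar\mu, \mu)\, d\Omega(\mu) \le \int W_2^2(\delta_{\bar y}, \mu)\, d\Omega(\mu)$; and finally $\var(\bar\mu) \le \int W_2^2(\bar\mu,\mu)\,d\Omega(\mu)$ would again go the wrong way — so instead I should compare more carefully: actually what I want is $\int \var(\mu)\,d\Omega(\mu) \ge \var(\bar\mu)$, and the cleanest route is to replace $\delta_{\bar y}$ above by a well-chosen competitor. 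The correct choice is: for each $\mu$ let $\bar y_\mu$ be a barycenter of $\mu$, run the quasi-geodesic backwards from $\mu$ to $\bar\mu$, and apply convexity with the constant curve at $\bar y_\mu$; then the left endpoint gives $\var(\bar\mu) \le W_2^2(\delta_{\bar y_\mu},\bar\mu)$, the right endpoint gives $W_2^2(\delta_{\bar y_\mu},\mu) = \var(\mu)$, and the first-variation term vanishes after integrating in $\mu$ because now $\bar y_\mu$ is the barycenter of $\mu$ (the fixed endpoint of the quasi-geodesic) — here one uses the balance condition at the $\mu$ end. Integrating in $\Omega$ then yields $\var(\bar\mu) \le \int_{P(M)} \var(\mu)\, d\Omega(\mu)$ directly.

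The main obstacle is the low-regularity case: when $\Omega$ is supported on singular measures, the barycenter $\bar\mu$ need not be unique nor absolutely continuous, so the system of optimal maps and Lemma~\ref{lem: first variation} are not directly available. The plan here is a two-step approximation. First, use Lemma~\ref{nonunique bc}: $\bar\mu$ is the \emph{unique} barycenter of $\tilde\Omega := \frac12\delta_{\bar\mu} + \frac12\Omega$, and if $\bar\mu$ happens to be absolutely continuous then $\tilde\Omega(P_{ac}(M)) > 0$ and the argument above applies to $\tilde\Omega$ (noting $\int\var\,d\tilde\Omega = \frac12\var(\bar\mu) + \frac12\int\var\,d\Omega$, so the inequality for $\tilde\Omega$ is equivalent to that for $\Omega$). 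To also drop absolute continuity of $\bar\mu$, approximate $\Omega$ weakly-$*$ by measures $\Omega^N$ for which the (unique) barycenters $\bar\mu^N$ are absolutely continuous — e.g.\ by convolving each $\mu$ in a measurable family, or by adding a small absolutely continuous mass — apply the established inequality $\var(\bar\mu^N) \le \int\var\,d\Omega^N$ to each $N$, and pass to the limit: by Lemma~\ref{bc convergence} a subsequence of $\bar\mu^N$ converges to a barycenter of $\Omega$ (which, in the regularized setting where we have arranged uniqueness, is $\bar\mu$), Lemma~\ref{weak cont} gives $\var(\bar\mu^N) \to \var(\bar\mu)$, and Corollary~\ref{approx omega} gives $\int\var\,d\Omega^N \to \int\var\,d\Omega$. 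Care is needed to ensure the approximating families can be chosen measurably and with the right convergence, but these are routine given compactness of $M$ and the tools already in place; the conceptual content is entirely in the convexity-plus-first-variation computation above.
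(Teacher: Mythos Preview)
Your proposal contains a genuine gap in the core convexity computation. You correctly set up the machinery (optimal maps $F_\mu$ from $\bar\mu$, the balance condition $\int_{P(M)}\nabla\phi_\mu(x)\,d\Omega(\mu)=0$, and the first variation Lemma~\ref{lem: first variation}), and you correctly diagnose that choosing the constant curve $\gamma\equiv\bar y$ (a barycenter of $\bar\mu$) gives only $\int W_2^2(\delta_{\bar y},\mu)\,d\Omega(\mu)\ge\var(\bar\mu)$, which is too weak because $W_2^2(\delta_{\bar y},\mu)\ge\var(\mu)$ points the wrong way. But your attempted fix --- replacing $\bar y$ by a barycenter $\bar y_\mu$ of $\mu$ --- does not work. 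With that choice the first-variation term at the $\bar\mu$-endpoint is
\[
2\int_M\big\langle \exp_x^{-1}(\bar y_\mu),\,\nabla\phi_\mu(x)\big\rangle\,d\bar\mu(x),
\]
and after integrating in $\Omega$ you cannot invoke the balance condition, because the factor $\exp_x^{-1}(\bar y_\mu)$ now \emph{depends on $\mu$}; there is no ``balance condition at the $\mu$ end'' for an individual $\mu$. Nor is there any evident sign on this cross term. (Working from the $\mu$-endpoint instead runs into the separate problem that $\mu$ need not be absolutely continuous, so the optimal-map and first-variation formulas are unavailable there.)

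The paper resolves this by letting the curve on $M$ move as well: for each $\mu$ (in the finitely supported case, each $\mu^i$) take the \emph{geodesic} $\gamma_\mu(t)$ from a barycenter $\bar w$ of $\bar\mu$ at $t=0$ to a barycenter $w_\mu$ of $\mu$ at $t=1$, and set $\Phi(t)=\int_{P(M)} W_2^2(\delta_{\gamma_\mu(t)},\mu_t^\mu)\,d\Omega(\mu)$. Then $\Phi(0)=\var(\bar\mu)$ and $\Phi(1)=\int\var(\mu)\,d\Omega(\mu)$ exactly (no inequality is thrown away at either endpoint), each summand is convex by Theorem~\ref{thm: convex W 2}, and in the first-variation formula at $t=0$ the vector $\exp_x^{-1}(\gamma_\mu(0))=\exp_x^{-1}(\bar w)$ is \emph{independent of $\mu$}, so the balance condition kills $\Phi'(0)$. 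That is the missing idea. Your approximation outline (reduce to the regular case, then pass to limits via Lemmas~\ref{bc convergence}, \ref{weak cont}, Corollary~\ref{approx omega}, and the $\tfrac12\delta_{\bar\mu}+\tfrac12\Omega$ trick for non-uniqueness) is in the right spirit and matches the paper's Cases~2 and~3, though the paper carries out Case~1 only for finitely supported $\Omega$, which is all that is needed for the approximation.
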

\begin{proof}

The proof is divided into three, successively more general cases.

\textbf{Case 1:} The measure $\Omega =\sum_{i=1}^N\lambda_i \delta_{\mu_i}$ has finite support and one of the $\mu_i$ is absolutely continuous with respect to volume.

 In this case, from the result of \cite{P9}, the barycenter $\bar \mu$ is unique and absolutely continuous with respect to the volume measure (this also holds without the curvature assumption \cite{KP2}).    We will need to set up some relevant notation. Let $T^i$ be the optimal map from $\bar \mu $ to $\mu^i$; by the Brenier-McCann theorem (see \cite{m3}), for a.e. $x$, $T^i (x) = \exp_x \nabla \phi^i(x)$ for some $d^2/2$ convex function $\phi^i$. Moreover, 
by a straightforward adaptation of a result of Agueh-Carlier \cite{ac} (see also \cite{KP2} for more general cases), we have
\begin{align}\label{eqn: zero sum}
\sum_{i=1}^N\lambda_i \nabla \phi^i (x)  = 0 \quad \hbox{ for a.e. $x \in spt(\bar \mu)$. }
\end{align}
  For each $i$, let $w^i$ be a  barycenter of $\mu^i$; that is, $W_2^2 (\delta_{w^i}, \mu^i) = \var (\mu^i)$.   Let $\bar w$ be a barycenter of $\bar \mu$.  

Let $\mu_t^i=T^i_{t\#}\bar \mu=\exp_xt\nabla \phi^i(x) _{\#}\bar \mu$ be the corresponding displacement interpolations (which are of course  $W_2$ quasi-geodesics); note that $\mu^i_0 = \bar \mu, \mu^i_1 =\mu^i$.  
Consider the geodesic $t\in [0,1]\mapsto \gamma^i (t) \in M$ with $\gamma^i (0) = \bar w$, $\gamma^i (1) = w^i$, and  the function 

$$t \mapsto \Phi (t) = \sum_{i=1}^N \lambda_i W_2^2 ( \delta_{\gamma^i (t)}, \mu^i_t).$$ 
Use \eqref{eqn: zero sum} to compute 
\begin{align}\label{eqn: zero derivative} \nonumber  
& \frac{d}{dt}\Big|_{t=0} \Phi (t) 
= \sum_{i=1}^N\lambda_i  \frac{d}{dt}\Big|_{t=0} W_2^2 (\delta_{\gamma^i (t)}, \mu_t^i) \\ \nonumber
 & =\sum_{i=1}^N  \lambda_i\big(-2 \int_M  \langle \exp_{x}^{-1} \bar w, \nabla \phi^i (x) \rangle d\bar \mu(x)\big) \quad\hbox{(from Lemma~\ref{lem: first variation})}\\ \nonumber 
 & =  -2 \int_M  \langle \exp_{x}^{-1} \bar w, \sum_{i=1}^N \lambda_i \nabla \phi^i (x)  \rangle d\bar \mu(x) \quad \\ 
 & =0 \quad \hbox{(from \eqref{eqn: zero sum})}
\end{align}
Note that $\Phi(t)$ is  convex, since from Theorem~\ref{thm: convex W 2}, $W_2^2 (\delta_{\gamma^i (t)}, \mu^i (t))$ is convex in $t$. 
Combined with \eqref{eqn: zero derivative}, convexity of $\Phi$ implies 
\begin{align*}
\Phi (0) \le \Phi (t).
\end{align*}
But, notice that 
\begin{align*}
  \Phi (0) &= \sum_{i=1}^N \lambda_i W_2^2 (\bar w, \bar \mu) = \var (\bar \mu),\\
  \Phi (1) & =\sum_{i=1}^N \lambda_i W_2^2 ( \omega^i, \mu^i )   =\int_{P(M)}\var(\mu)d\Omega(\mu) .    
\end{align*}
This establishes the result in the first case.

\textbf{Case 2:} Next, we  consider the case when  $\Omega$ has a unique barycenter.   

Noting that Wasserstein space $P(M)$ over $M$ is a Polish space, we can choose a sequence $\Omega^N=\sum_{i=1}^N\lambda_i \delta_{\mu_i}$ of  finitely supported measures on $P(M)$ converging weakly-* to $\Omega$, by \cite[Theorem 6.18]{V2}.   For each $N$, we can also choose at least one of the $\mu_i$ to be absolutely continuous with respect to volume, by weak-* density of absolutely continuous measures on $M$.   By Lemma \ref{bc convergence} and uniqueness, we know that the barycenters $\bar \mu^N$ of  $\Omega^N$ converge weakly to the barycenter $\bar \mu$ of $\Omega$.  Now, by the above

$$
\var(\bar \mu^N) \leq \int_{P(M)}\var(\mu) d\Omega^N(\mu).
$$
Now, take the limit as $N \rightarrow \infty$.  The right hand side tends to $\int_{P(M)} \var(\mu) d\Omega(\mu)$ by Corollary \ref{approx omega}.  The left hand side tends to $ \var(\bar \mu)$  by Lemma  \ref{weak cont}.
This completes the proof in the case when the barycenter is unique.

\textbf{Case 3:} Finally, we consider the general case.

Let $\bar \mu$ be a (not necessarily unique) barycenter of $\Omega$.  By case 2 and Lemma \ref{nonunique bc},  we have
$$
\var(\bar \mu) \leq \frac{1}{2}\var(\bar \mu) +\frac{1}{2} \int_{P(M)} \var(\mu) d\Omega(\mu)
$$
which easily implies the desired result.
\end{proof}
 
Note that the above theorem does not hold if the curvature assumption is removed, as the following example illustrates:
\begin{example}[Sphere]\label{ex: sphere}
Let $M$ be the $2$-dimensional  round sphere of circumference $2$, i.e. the Riemannian distance from the north to south pole is $1$. 
Then, consider the two measures $\mu_0= \delta_{n}$, $\mu_1 = \delta_{s}$, where $n$ and $s$ denote the north and south pole, respectively. Let $\Omega=\frac{1}{2} (\delta_{\mu_0}+ \delta_{\mu_1})$ on $P(M)$ and note that  $\int_{P(M)} \var (\mu) d\Omega(\mu)=\frac{1}{2}\var(\delta_{n}) +\frac{1}{2}\var(\delta_{s})=0$.  There are infinitely many $W_2$-barycenters; namely, any probability measure supported on the equator is a $W_2$-barycenter of $\Omega$. In particular, $\delta_z$ is a $W_2$-barycenter for any $z$ in the equator, which has vanishing variance. This does not violate the inequality of  Theorem~\ref{thm: reduction at barycenter}. However,  uniform measure $\bar \mu$ on the equator is also a barycenter. Then note that $\var (\bar \mu) =  \int d^2(t, n) d\mu(t)$ for the north pole $n$, and so $\var (\bar \mu) = 1/4 > 0 = \int_{P(M)} \var (\mu) d\Omega(\mu)$. 
\end{example}

We close this section by noting a consequence of Theorem~\ref{thm: reduction at barycenter} which will be relevant in the next section.
 
\begin{corollary}[Variance gets reduced at the barycenter of an orbit of isometries]\label{coro: reduction coro}
   Let $G$ be a set of isometries on  a complete simply connected manifold $M$ of nonnegative curvature. Let $\Omega$ be a probability measure on $G$. 
  Consider  a 
 probability measure $\mu$ on $M$  and assume that  there exists a large geodesic ball that contains the union of the  supports of  the measures $g_\# \mu$ for all $g \in G$.  Let $\bar \mu_\Omega$ be a $W_2$ barycenter of  the measure $(g \mapsto g_\# \mu)_{\#}\Omega$ on $P(M)$.

 Then, 
\begin{align*}
 \var ( \bar \mu_\Omega) \le \var (\mu).
\end{align*}

\end{corollary}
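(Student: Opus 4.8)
The plan is to obtain this as a direct consequence of Theorem~\ref{thm: reduction at barycenter}, applied to the pushforward measure
$$\Omega' := (g \mapsto g_\#\mu)_\#\Omega,$$
which is a Borel probability measure on $P(M)$, together with the elementary fact that isometries preserve the variance. For the latter, if $g$ is an isometry of $M$ and $\nu \in P(M)$, then the change of variables $x = g^{-1}(x')$, $y = g^{-1}(y')$ in $\int_M d^2(x,y)\,d\nu(x)$, using that $g$ is a distance-preserving bijection, gives $\int_M d^2(x,y)\,d\nu(x) = \int_M d^2(x',y')\,d(g_\#\nu)(x')$, so taking infima over $y$ (resp. $y'$) yields $\var(g_\#\nu) = \var(\nu)$. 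Consequently, since $\Omega$ is a probability measure,
$$\int_{P(M)}\var(\nu)\,d\Omega'(\nu) = \int_G \var(g_\#\mu)\,d\Omega(g) = \int_G \var(\mu)\,d\Omega(g) = \var(\mu).$$
Thus, as soon as Theorem~\ref{thm: reduction at barycenter} is available for $\Omega'$, it gives $\var(\bar\mu_\Omega) \le \int_{P(M)}\var(\nu)\,d\Omega'(\nu) = \var(\mu)$, which is the assertion.

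The work that remains is to fit the hypotheses: Theorem~\ref{thm: reduction at barycenter} is stated for a compact, geodesically convex domain, while here $M$ need not be compact. Let $B$ be the large closed geodesic ball furnished by the hypothesis, so that $\supp(g_\#\mu) \subset B$ for every $g \in G$, and in particular $\supp\mu \subset B$ (take $g$ to be the identity). Since $M$ is simply connected with nonpositive sectional curvature, the Cartan--Hadamard theorem implies that $B$ is compact and totally geodesically convex, and distances within $B$ coincide with distances in $M$; hence $B$ is a compact geodesically convex domain in a complete nonpositively curved manifold as required, $W_2$ on $P(B)$ is the restriction of $W_2$ on $P(M)$, and $\Omega'$ is carried by $\{g_\#\mu : g\in G\} \subset P(B)$. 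I then claim that any $W_2$ barycenter $\bar\mu_\Omega$ of $\Omega'$ is automatically supported in $B$: the nearest-point projection $\pi_B\colon M\to B$ onto the closed convex set $B$ is well defined and $1$-Lipschitz and satisfies the CAT(0) inequality $d^2(x,y) \ge d^2(x,\pi_B(x)) + d^2(\pi_B(x),y)$ for all $y \in B$; pushing an optimal plan between $\bar\mu_\Omega$ and $g_\#\mu$ forward by $(\pi_B,\mathrm{id})$ then shows $W_2^2(\pi_{B\#}\bar\mu_\Omega, g_\#\mu) \le W_2^2(\bar\mu_\Omega, g_\#\mu)$ for every $g$, with strict inequality whenever $\bar\mu_\Omega(M\setminus B)>0$; integrating against $\Omega'$ and invoking minimality of $\bar\mu_\Omega$ forces $\bar\mu_\Omega(M\setminus B)=0$. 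Hence $\bar\mu_\Omega \in P(B)$, and it is a $W_2$ barycenter of $\Omega'$ within $P(B)$ as well.

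Now Theorem~\ref{thm: reduction at barycenter}, applied with $B$ in the role of $M$ and $\Omega'$ in the role of $\Omega$, yields $\var(\bar\mu_\Omega) \le \int_{P(B)}\var(\nu)\,d\Omega'(\nu)$, and combining this with the identity $\int_{P(B)}\var(\nu)\,d\Omega'(\nu) = \var(\mu)$ established above completes the proof. I expect the main obstacle to be the middle paragraph, namely genuinely landing in the compact geodesically convex setting of Theorem~\ref{thm: reduction at barycenter} and in particular confirming that the barycenter does not escape the ball $B$; everything else is a one-line application of that theorem together with the isometry-invariance of the variance. (One can alternatively bypass the projection argument by simply declaring the ambient domain to be $B$ from the outset and taking $\bar\mu_\Omega$ to be a barycenter in $P(B)$, whose existence follows from the same projection estimate.)
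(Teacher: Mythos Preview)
Your proof is correct and follows essentially the same approach as the paper: apply Theorem~\ref{thm: reduction at barycenter} to the pushforward measure on $P(M)$ and use that isometries preserve the variance. The paper's own proof is a two-line remark to this effect, noting only that geodesic balls in a simply connected nonpositively curved manifold are geodesically convex; your additional projection argument, showing that any barycenter is supported in $B$, supplies a detail the paper leaves implicit.
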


\begin{proof}
 The corollary immediately follows from Theorem~\ref{thm: reduction at barycenter} since $\var (g_\# \mu) =\var (\mu)$ for each isometry $g$.   Note that under the nonnegative curvature and simply connected assumption, each geodesic ball is geodesically convex. 
 \end{proof}
 \begin{remark}
Although we do not pursue it here, by assuming some decay conditions on the measure $\Omega$ on $G$, as well as considering the space $P_2(M)$ of measures with finite variance, one may extend the above results to non compact cases, in particular, to include isometry group actions on the whole Euclidean space or the hyperbolic space. 
 \end{remark}
 
\section{Comparison with linear interpolation}
 In this section, we obtain, as a corollary to Theorem~\ref{thm: reduction at barycenter}, a comparison result for the variance functional between the linear barycenters and the $W_2$ barycenters.

We first consider the linear interpolation between probability measures, $\mu_t=(1-t)\mu_0+t\mu_1$.  We then have that

$$
t \mapsto \var(\mu_t) =\min_y \int_Md^2(x,y)d\mu_t(x)
$$
is an infimum of affine functions, and hence concave. Define $BC^L(\Omega) \in P(M)$ to be the linear barycenter of the measure $\Omega$ on $P(M)$; that is, for each Borel $A \subseteq M$,

$$
BC^L(\Omega)[A]:=\int_{P(M)}\mu(A)d\Omega(\mu).
$$ 
Then, the (linear) concavity of the variance and the classical (linear) Jensen's inequality implies
\begin{align}\label{eq: BCL}
 \var(BC^L(\Omega))\geq \int_{P(M)}\var(\mu)d\Omega(\mu).
\end{align}

Note that this holds for \emph{any} Riemannian manifold $M$; the sectional curvature does not play a role.  On the other hand, we have:
\begin{corollary}\label{coro: var comparison}
 Let  $M$ be a compact geodesically convex domain in a complete manifolds of nonpositive sectional curvature, $\Omega$ be a probability measure on $P(M)$, and $\bar \mu$ be its $W_2$-barycenter. 
Then,  $$
\var(\bar \mu)\leq \var(BC^L(\Omega)).
$$
\end{corollary}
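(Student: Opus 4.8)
The plan is to obtain the inequality by sandwiching the common quantity $\int_{P(M)}\var(\mu)\,d\Omega(\mu)$ between the two variances in question. On one side, inequality \eqref{eq: BCL}, which is valid on any Riemannian manifold and follows from the concavity of $t\mapsto \var(\mu_t)$ along linear interpolations together with the classical Jensen inequality, gives
\[
\var(BC^L(\Omega)) \;\ge\; \int_{P(M)}\var(\mu)\,d\Omega(\mu).
\]
On the other side, the hypotheses of the present corollary — $M$ a compact, geodesically convex domain in a complete manifold of nonpositive sectional curvature — are exactly those of Theorem~\ref{thm: reduction at barycenter}, which therefore applies and yields
\[
\var(\bar\mu) \;\le\; \int_{P(M)}\var(\mu)\,d\Omega(\mu).
\]
Concatenating the two displays gives $\var(\bar\mu)\le\var(BC^L(\Omega))$, which is the assertion.

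Before chaining them, I would record the routine well-definedness points: since $M$ is compact, every Borel probability measure on $M$ has finite variance, so every quantity above is finite; and $BC^L(\Omega)$, defined by $BC^L(\Omega)[A]=\int_{P(M)}\mu(A)\,d\Omega(\mu)$, is a genuine Borel probability measure on $M$ (it is nonnegative, $\sigma$-additive by monotone convergence, and has total mass $1$ because $\Omega$ is a probability measure), so that \eqref{eq: BCL} may legitimately be invoked for it. Both ingredient inequalities have already been established in the text: \eqref{eq: BCL} just above this corollary, and the barycentric convexity bound as Theorem~\ref{thm: reduction at barycenter}. Hence nothing further needs to be proved.

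I do not expect a genuine obstacle here: the statement is a formal consequence of a linear-convexity fact (Jensen applied to the concave functional $\var$) and a displacement-convexity-over-barycenters fact (Theorem~\ref{thm: reduction at barycenter}), and the curvature hypothesis enters only through the latter. The single point worth a line of care is simply to confirm that the hypotheses assumed here match those under which Theorem~\ref{thm: reduction at barycenter} was proved, and that the linear barycenter genuinely lies in $P(M)$; one might additionally remark, as commentary, that this appears to be the only route we know to the inequality, with no evident direct proof bypassing convexity over Wasserstein barycenters, but that observation is not part of the argument.
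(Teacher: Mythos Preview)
Your proposal is correct and is exactly the paper's own argument: it chains the linear Jensen inequality \eqref{eq: BCL} with Theorem~\ref{thm: reduction at barycenter} through the common quantity $\int_{P(M)}\var(\mu)\,d\Omega(\mu)$. The additional well-definedness remarks you include are sound but optional elaborations on what the paper states in one line.
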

\begin{proof}
 This immediately follows from  the preceding inequality combined with Theorem \ref{thm: reduction at barycenter}.
\end{proof}
 This inequality seems quite intuitive to us.  Consider the following, naive explanation.  For simplicity,  focus on the interpolation between two measures; displacement interpolation (the two measure case of $W_2$ barycenters) moves the support of one measure to the other continuously along geodesics, so that the support of the interpolant should not be much more spread out  than the supports of the two original measures.  On the other hand, the support of the linear interpolant is the union of the supports of the two original measures, and so we expect it to be more spread out (i.e, have  higher variance) than the displacement interpolant.
However, this intuition is somewhat misleading, as it does not require any assumptions on the curvature; as the following example demonstrates, the nonpositive curvature condition in Corollary \ref{coro: var comparison} is essential.

\begin{example}[Balloon on a string]
Consider the sphere $S^2$ of circumference $1$ (the ``balloon"), so the distance between the north and south poles is $\frac{1}{2}$, with a line segment of length $1$ (the ``string") attached to the south pole.  Let $x$ be the north pole, and  set $x_0=\exp_xv$,  $x_1 =\exp_x(-v)$ for some $v \in T_xM$ with $|v| =\epsilon <\frac{1}{4}$ (that is, $x_0$ and $ x_1$ are found at the same distance from the north pole, along opposite directions.)  Let $y$ be the point on the line segment at a distance $\frac{1}{2}-\epsilon$ from the south pole.  

Now, set $\mu_0 =\frac{1}{2}[\delta_{y} + \delta_{x_0}]$ and $\mu_1 =\frac{1}{2}[\delta_{y} + \delta_{x_1}]$.  Note that the south pole is the $W_2$-barycenter of both of these measures, and they each have variance $(\frac{1}{2}-\epsilon)^2$.  It is then easy to see that the south pole is also the barycenter of the linear interpolation:
$
\mu_{1/2}^L:=\frac{1}{2}[\mu_0 +\mu_1]
$
and that $\var(\mu_{1/2}^L) = (\frac{1}{2}-\epsilon)^2$ as well.  On the other hand, the displacement interpolant is given by 
$
\mu_{1/2}^{W}:=\frac{1}{2}[\delta_{y} + \delta_{x}]
$
(recalling that $x$ is the north pole) whose variance is given by 
\begin{align*}
\var(\mu_{1/2}^{W})=\frac{d^2(x,y)}{4} =\frac{(1-\epsilon)^2}{4}
>\var(\mu_t^L).
\end{align*}

Although the metric space in the example is not a smooth manifold, it can easily be smoothed out to construct smooth examples where the preceding variance inequality holds. 
\end{example}
\section{$W_2$ Projection to the $G$-invariance set}
In this section, we consider isometry group actions on the underlying space $M$, which also induce isometry group actions on $P(M)$. We are interested in the Wasserstein $W_2$-barycenter of the orbit of the group action, in relation to  functionals on $P(M)$. Our focus in this section is on the variance functional for nonpositively curved underlying space, so that we can use the results of the preceding sections. But, the similar results hold for other examples; see Remark~\ref{rmk: action} and Example~\ref{ex: entropy}.

We begin by showing that the projection onto the invariance set conincides with the barycenter of the orbit under left Haar measure.  

\begin{proposition}[Projection to $G$-invariance set]\label{prop: projection G invariant}
Let $G$ be a group of isometries on a Riemannian manifold $M$ and $H$ be a left invariant probability measure on $G$ (here, the group $G$ has to be compact).
 For a given  probability measure $\mu \in P(M)$, assume that the barycenter $BC^W_G(\mu)$ of $\Omega_\mu = (\mu \mapsto g_\#\mu)_\#H$ is unique.  Define the $G$-invariant set $I_G = \{ \nu \in P(M) \, | \, g_\# \nu = \nu, \quad \forall g \in G\}$. 
Then, 
\begin{enumerate}
 \item $BC^W_G(\mu) \in I_G$.
\item $BC^W_G(\mu)$ is the unique $W_2$ projection of $\mu$ to $I_G$; that is  $\{BC^W_G(\mu) \}= \mathop{\rm argmin}_{\nu \in I_G} W_2^2 (\nu,\mu) $, or, using the notation in the introduction,
\begin{align*}
BC^W_G(\mu) =P^W_G(\mu).
\end{align*}

\end{enumerate}
\end{proposition}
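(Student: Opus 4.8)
The plan is to prove the two assertions in order, using the variational characterization of the barycenter together with the fact that $g_\#$ is an isometry of $(P(M), W_2)$ for every $g \in G$. For part (1), I would first record the key invariance identity: since $H$ is left-invariant, for any fixed $h \in G$ the map $g \mapsto hg$ preserves $H$, and hence the pushforward measure $\Omega_\mu = (g \mapsto g_\#\mu)_\# H$ satisfies $(h_\#)_\# \Omega_\mu = \Omega_\mu$, where $h_\#: P(M) \to P(M)$ denotes the isometry induced by $h$. Then, for any $\nu \in P(M)$ and any $h \in G$, a change of variables gives
\begin{align*}
\int_{P(M)} W_2^2(h_\#\nu, \sigma)\, d\Omega_\mu(\sigma) = \int_{P(M)} W_2^2(h_\#\nu, h_\#\sigma)\, d\Omega_\mu(\sigma) = \int_{P(M)} W_2^2(\nu, \sigma)\, d\Omega_\mu(\sigma),
\end{align*}
where the first equality uses $(h_\#)_\#\Omega_\mu = \Omega_\mu$ and the second uses that $h_\#$ is a $W_2$-isometry. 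This shows that $h_\# \bar\mu$ minimizes the barycenter functional whenever $\bar\mu = BC^W_G(\mu)$ does; by the assumed uniqueness of the barycenter, $h_\#\bar\mu = \bar\mu$ for all $h \in G$, i.e. $\bar\mu \in I_G$.

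For part (2), I would argue that for any $\nu \in I_G$ the barycenter objective at $\nu$ equals $W_2^2(\nu, \mu)$: indeed, using $g_\#\nu = \nu$ and that $g_\#$ is a $W_2$-isometry,
\begin{align*}
\int_{P(M)} W_2^2(\nu, \sigma)\, d\Omega_\mu(\sigma) = \int_G W_2^2(\nu, g_\#\mu)\, dH(g) = \int_G W_2^2(g_\#\nu, g_\#\mu)\, dH(g) = \int_G W_2^2(\nu, \mu)\, dH(g) = W_2^2(\nu, \mu),
\end{align*}
since $H$ is a probability measure. Now, by part (1), $\bar\mu \in I_G$, so the barycenter objective evaluated at $\bar\mu$ equals $W_2^2(\bar\mu, \mu)$; and since $\bar\mu$ minimizes the barycenter objective over \emph{all} of $P(M)$, in particular over $I_G$, and on $I_G$ that objective coincides with $\nu \mapsto W_2^2(\nu,\mu)$, we conclude $\bar\mu$ minimizes $\nu \mapsto W_2^2(\nu, \mu)$ over $I_G$, i.e. $\bar\mu = P^W_G(\mu)$. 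Uniqueness of the projection follows from the uniqueness of the barycenter: any minimizer $\nu_*$ of $W_2^2(\cdot,\mu)$ over $I_G$ achieves barycenter value $W_2^2(\nu_*,\mu) \le W_2^2(\bar\mu,\mu)$, which is the minimal barycenter value, so $\nu_*$ is itself a barycenter, hence $\nu_* = \bar\mu$.

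The only genuinely delicate point is verifying the invariance $(h_\#)_\#\Omega_\mu = \Omega_\mu$ cleanly from left-invariance of $H$ — this is a short measure-theoretic computation, noting that $g \mapsto g_\#\mu$ followed by $h_\#$ equals $g \mapsto (hg)_\#\mu$, i.e. the composition is the pushforward-of-$\mu$ map precomposed with left translation $L_h$, and $(L_h)_\# H = H$. Everything else is a routine consequence of the defining minimization property of the barycenter and the isometry property of the group action on Wasserstein space; I do not anticipate any real obstacle beyond keeping the changes of variables straight.
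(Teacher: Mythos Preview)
Your proof is correct and follows essentially the same approach as the paper's: both parts rely on the $W_2$-isometry property of $g_\#$ together with left invariance of $H$ to show first that $g_\#\bar\mu$ is again a barycenter (hence equals $\bar\mu$ by uniqueness), and then that on $I_G$ the barycenter functional collapses to $W_2^2(\cdot,\mu)$, yielding the projection characterization and its uniqueness. Your explicit formulation of the invariance $(h_\#)_\#\Omega_\mu = \Omega_\mu$ is a slightly cleaner packaging of the same change-of-variables step the paper carries out inline.
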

This should be a well-known standard fact from metric geometry, but, we give its proof for completeness. 
\begin{remark}\label{rem: unique}
The uniqueness condition on the barycenter is satisfied when $\mu$ is absolutely continuous with respect to volume (as then each $g_\# \mu$ is clearly absolutely continuous as well) by Proposition \ref{uniquebc}.
\end{remark}
\begin{proof}
For simplicity of notation, we denote $\bar \mu =BC^W_G(\mu)$.  We prove the two assertions below:
\\
  1.  For each $g, g' \in G$, by the isometry property, we have
$
 W_2 (g_\# \bar \mu, gg '_\#\mu)  = W_2 (\bar \mu, g'_\#\mu)
$
Thus,
\begin{align*}
 & \int_G W_2^2 (\bar \mu, g'_\#\mu) d\Omega (g') = \int_G W_2^2(g_\# \bar \mu,  g g'_\#\mu) d\Omega (g') \\ & = \int_G W_2^2 (g_\# \bar \mu,  g g'_\#\mu) d\Omega (gg') \quad \hbox{(as $\Omega$ is left invariant )}
\end{align*}
This implies that $g_\#\bar \mu$ is a barycenter of $\Omega$; by the uniqueness assumption, this shows $\bar \mu = g_\#\bar \mu$. As this holds for each $g\in G$, we have $\bar \mu \in I_G$. 

2.  Notice that if $\nu \in I_G$, then, $ W_2 (\nu, g_\#\mu) = W_2 (\nu, \mu) $ for all $g\in G$. Thus, 
\begin{align*}
W_2^2 (\nu, \mu) & =  \int_G W_2^2 (\nu, \mu) d\Omega(g)\\
& = \int_G W_2^2 (\nu, g_\#\mu) d\Omega(g) \\
& \ge \int_G W_2^2 (\bar \mu, g_\#\mu) d\Omega(g) \quad \hbox{(from the definition of $\bar \mu$)}\\
& = \int_G W_2^2 (\bar \mu, \mu) d\Omega(g)  \quad \hbox{(since $\bar \mu \in I_G $ from 1. )}\\
& = W_2^2 (\bar \mu, \mu)
\end{align*}
This shows that $\bar \mu$ is the minimum of $\{W_2^2 (\nu,\mu)\}_{\nu \in I_G}$.  Noting that the inequality is strict unless $\nu =\bar \mu$, by the uniqueness of the barycenter, completes the proof.
\end{proof}

An interesting consequence follows:
\begin{corollary}[Variance gets reduced at the $W_2$ projection to the invariant set]\label{coro: variance reduction projection}
Under the same notation as in Proposition~\ref{prop: projection G invariant}, assume further that $M$ is   a complete, simply connected nonpositively curved manifold. For each absolutely continuous probability measure  $\mu$  with compact support,
 
$$
 \var ( P^W_G\mu) \le \var (\mu).
$$
\end{corollary}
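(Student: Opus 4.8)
The plan is to show Corollary~\ref{coro: variance reduction projection} follows by combining Proposition~\ref{prop: projection G invariant} (which identifies $P^W_G(\mu)$ with the Wasserstein barycenter $BC^W_G(\mu)$ of the orbit measure $\Omega_\mu = (g\mapsto g_\#\mu)_\#H$) with Corollary~\ref{coro: reduction coro} (which bounds the variance of the barycenter of an orbit by the variance of $\mu$). Concretely: since $\mu$ is absolutely continuous with compact support, each pushforward $g_\#\mu$ is absolutely continuous (as $g$ is an isometry, hence a diffeomorphism), so by Remark~\ref{rem: unique} (i.e.\ Proposition~\ref{uniquebc}) the barycenter of $\Omega_\mu$ is unique; thus Proposition~\ref{prop: projection G invariant} applies and gives $P^W_G(\mu) = BC^W_G(\mu) = \bar\mu_{\Omega_\mu}$. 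Then Corollary~\ref{coro: reduction coro}, whose hypotheses I verify below, yields $\var(\bar\mu_{\Omega_\mu}) \le \var(\mu)$, which is exactly the claimed inequality.

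The key technical point to check is the compactness/boundedness hypothesis needed to invoke Corollary~\ref{coro: reduction coro}, namely that there is a large geodesic ball containing the union of the supports $\supp(g_\#\mu)$ over all $g\in G$. First I would note that $G$ is assumed compact (this is part of the standing assumptions of Proposition~\ref{prop: projection G invariant}, inherited here via ``the same notation''). Fix a basepoint $p\in M$ and let $R_0$ be such that $\supp(\mu)\subseteq B(p,R_0)$, which exists since $\mu$ has compact support. For $g\in G$, $\supp(g_\#\mu) = g(\supp\mu) \subseteq g(B(p,R_0)) = B(gp, R_0)$ because $g$ is an isometry. Since $g\mapsto gp$ is continuous and $G$ is compact, the set $\{gp : g\in G\}$ is a compact subset of $M$, hence contained in some ball $B(p,R_1)$; therefore $\bigcup_{g\in G}\supp(g_\#\mu) \subseteq B(p, R_0+R_1)$, a single geodesic ball. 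Under the simple-connectedness and nonpositive-curvature hypotheses this ball is geodesically convex, so we may regard it (or a slightly larger compact geodesically convex ball) as the compact domain $M$ required by Corollary~\ref{coro: reduction coro} and Theorem~\ref{thm: reduction at barycenter}; the orbit measure $\Omega_\mu$ is supported on $P$ of this ball, and all barycenters lie in it as well.

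I anticipate the main obstacle is purely bookkeeping rather than mathematical depth: making sure the compact geodesically convex ball used as the ambient domain genuinely contains all relevant measures (the orbit $\{g_\#\mu\}$ and the barycenter), and that replacing the original complete manifold by this compact convex subdomain does not alter the Wasserstein barycenter or the value of $\var$. Since $\var$ is defined by an infimum of $\int d^2(x,y)\,d\mu(x)$ over $y\in M$, one should remark that, for a measure supported in a convex ball, a barycenter can be taken inside that ball (the distance function to points outside is not smaller), so the restriction is harmless; similarly the $W_2$ barycenter of $\Omega_\mu$ computed in the subdomain agrees with the one in $M$ because optimal plans between measures supported in a geodesically convex set move mass along geodesics staying in that set. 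With these remarks in place, the chain $\var(P^W_G\mu) = \var(BC^W_G(\mu)) = \var(\bar\mu_{\Omega_\mu}) \le \var(\mu)$ is immediate, and the proof is essentially a two-line invocation of the already-established results.
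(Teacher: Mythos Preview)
Your proposal is correct and follows essentially the same approach as the paper: the paper's proof is literally the one-line statement ``This immediately follows from Corollary~\ref{coro: reduction coro} and Proposition~\ref{prop: projection G invariant}.'' You have simply fleshed out the bookkeeping (uniqueness via absolute continuity, and the existence of a containing geodesic ball via compactness of $G$) that the paper leaves implicit.
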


\begin{proof}
This immediately follows from Corollary~\ref{coro: reduction coro} and Proposition~\ref{prop: projection G invariant}. 
\end{proof}

If $\mu$ is absolutely continuous, with a density $f$ in $L^2$, it is straightforward to see that the linear barycenter $BC^L_G(\mu)$ of $\Omega_\mu$   is absolutely continuous as well, and that it's density is given by
$$
\bar f(x) =\int_Gf(g^{-1}(x))dH(g).
$$
This $\bar f$ is the $L^2$ minimizer of the functional
$$
h \mapsto \int_{G}||h-f\circ g^{-1}||^2dH(g)
$$
and it also coincides  with the $L^2$ projection $P^{L^2}_G(\mu)$ of $\mu$ onto the  subspace of $L^2$ functions  which are invariant under the action of $G$. From \eqref{eq: BCL} and Theorem~\ref{coro: variance reduction projection}, we have
\begin{align}\label{eq: comparison}
 \var(P^{W}_G(\mu)) \leq \var(\mu) \leq \var(P^{L^2}_G(\mu)).
\end{align}

  In particular, if we consider the case $G=SO(n)$ acting on the Euclidean space $\R^n$ (or any rotationally symmetric nonpostivley curved metric on $\R^n$, e.g. the hyperbolic metric), projecting onto the invariant set can be interpreted as finding the best rotationally invariant approximation of $\mu$.  Finding the best approximation of a measure $\mu$, in the Wasserstein sense, by a radially symmetric measure decreases the variance.  On the other hand, finding the best approximation of a measure by a radially symmetric measure in the $L^2$ sense \textit{increases} the variance.

\begin{remark}\label{rmk: action}

It is worth noting that Corollary~\ref{coro: variance reduction projection} holds whenever the variance is replaced with any functional $F$ which is convex over $W_2$-barycenters (including the three main types discovered in \cite{m},  whose convexity over $W_2$ barycenters are obtained in \cite{ac} on the Euclidean space and in \cite{KP2} on Riemannian manifolds with nonnegative Ricci curvature, and more generally on smooth metric measure spaces satisfying the $CD(K,N)$ condition for $K\ge 0$), provided the functionals are invariant under  an isometry group $G$; that is, provided $F(A_\#\mu) =\mu$ for all $\mu$ and all $A \in G$.  We feel the variance case on Hadamard manifolds (another name for complete, simply connected nonpositively curved manifolds)  is of special interest as the opposite holds true for linear barycenters.  Linear and Wasserstein projections give two ways to canonically generate a $G$-invariant measure from a given measure; one of these decreases the variance while the other increases it. 
\end{remark}

\begin{example}\label{ex: entropy}
As an illustrative example, consider the entropy functional, $F(fd\vol) =\int_{M}f(x)\ln(f(x))d\vol(x)$ on a manifold with nonnegative Ricci curvature.  It is well known that $F$ is displacement convex; our recent work \cite{KP2} extends this result to show that $F$ is in fact convex over barycenters.  For any  absolutely continuous measure $\mu$ and any compact group of isometries $G$ on $M$, we then get

\begin{equation}\label{entropy}
 F(BC^W_G(\mu)) \leq F(\mu).
\end{equation}
In the particular case when $M=S^n$ is the round sphere and $G$ is the whole rotation group, the barycenter  $BC^W_G(\mu) =\vol/\vol(S^n)$ must be uniform measure, as this is the only probability measure on $S^n$ which is invariant under this group.  It is well known that uniform measure minimizes the entropy, so this is consistent with \eqref{entropy}.  For smaller rotation groups $G$, symmetrizing with respect to $G$ (that is, projecting onto the $G$-invariant set) reduces the entropy, by \eqref{entropy}. 
\end{example}

\bibliographystyle{plain}
\bibliography{biblio}

\begin{thebibliography}{10}

\bibitem{ac}
M.~Agueh and G.~Carlier.
\newblock {Barycenters in the Wasserstein space}.
\newblock {\em SIAM J. Math. Anal.}, 43(2):904--924, 2011.

\bibitem{ags}
L.~Ambrosio, N.~Gigli, and G.~Savar\'e.
\newblock {\em Gradient flows in metric spaces and in the space of probability
  measures.}
\newblock Lectures in Mathematics, ETH Z\"urich. Birkh\"auser Verlag, Basel,
  2005.

\bibitem{bren}
Y.~Brenier.
\newblock Decomposition polaire et rearrangement monotone des champs de
  vecteurs.
\newblock {\em C.R. Acad. Sci. Pair. Ser. I Math.}, 305:805--808, 1987.

\bibitem{c-ems}
D.~Cordero-Erausquin, R.~J. McCann, and M.~Schmuckenschl\"ager.
\newblock { A Riemannian interpolation inequality a la Borell, Brascamp and
  Lieb}.
\newblock {\em Invent. Math.}, 146(2):219--257, 2001.

\bibitem{GG}
Alfred Galichon and Nassif Ghoussoub.
\newblock Variational representations for {$N$}-cyclically monotone vector
  fields.
\newblock {\em Pacific J. Math.}, 269(2):323--340, 2014.

\bibitem{GhMa}
Nassif Ghoussoub and Bernard Maurey.
\newblock Remarks on multi-marginal symmetric {M}onge-{K}antorovich problems.
\newblock {\em Discrete Contin. Dyn. Syst.}, 34(4):1465--1480, 2014.

\bibitem{GhM}
Nassif Ghoussoub and Abbas Moameni.
\newblock Symmetric {M}onge-{K}antorovich problems and polar decompositions of
  vector fields.
\newblock {\em Geom. Funct. Anal.}, 24(4):1129--1166, 2014.

\bibitem{KP2}
Y.-H. Kim and B.~Pass.
\newblock {Wasserstein barycenters over Riemannian manifolds }.
\newblock Preprint. Currently available at arXiv:1412.7726.

\bibitem{lottvillani}
John Lott and C{\'e}dric Villani.
\newblock Ricci curvature for metric-measure spaces via optimal transport.
\newblock {\em Ann. of Math. (2)}, 169(3):903--991, 2009.

\bibitem{m3}
R.~McCann.
\newblock { Polar factorization of maps on Riemannian manifolds}.
\newblock {\em Geom. Funct. Anal.}, 11:589--608, 2001.

\bibitem{m}
R.J. McCann.
\newblock A convexity theory for interacting gases and equilibrium crystals.
\newblock {\em Adv. Math.}, 128(1):153--179, 1997.

\bibitem{momeni14}
Abbas Moameni.
\newblock {Invariance properties of the Monge-Kantorovich mass transport
  problem}.
\newblock Preprint at arXiv:1311.7051.

\bibitem{ottovillani}
F.~Otto and C.~Villani.
\newblock Generalization of an inequality by {T}alagrand and links with the
  logarithmic {S}obolev inequality.
\newblock {\em J. Funct. Anal.}, 173(2):361--400, 2000.

\bibitem{P5}
Brendan Pass.
\newblock Optimal transportation with infinitely many marginals.
\newblock {\em J. Funct. Anal.}, 264(4):947--963, 2013.

\bibitem{P9}
Brendan Pass.
\newblock Multi-marginal optimal transport and multi-agent matching problems:
  uniqueness and structure of solutions.
\newblock {\em Discrete Contin. Dyn. Syst.}, 34(4):1623--1639, 2014.

\bibitem{sturm06}
Karl-Theodor Sturm.
\newblock On the geometry of metric measure spaces. {I}.
\newblock {\em Acta Math.}, 196(1):65--131, 2006.

\bibitem{sturm06a}
Karl-Theodor Sturm.
\newblock On the geometry of metric measure spaces. {II}.
\newblock {\em Acta Math.}, 196(1):133--177, 2006.

\bibitem{sturm}
K.T. Sturm.
\newblock {Probability measures on metric spaces of nonpositive curvature}.
\newblock {\em Contemp. Math.}, 338:357--390, 2003.

\bibitem{V}
C.~Villani.
\newblock {\em Topics in optimal transportation}, volume~58 of {\em Graduate
  Studies in Mathematics.}
\newblock American Mathematical Society, Providence, 2003.

\bibitem{V2}
C.~Villani.
\newblock {\em Optimal transport: old and new}, volume 338 of {\em Grundlehren
  der mathematischen Wissenschaften.}
\newblock Springer, New York, 2009.

\bibitem{sturmvonrenesse}
Max-K. von Renesse and Karl-Theodor Sturm.
\newblock Transport inequalities, gradient estimates, entropy, and {R}icci
  curvature.
\newblock {\em Comm. Pure Appl. Math.}, 58(7):923--940, 2005.

\bibitem{Zaev}
Danila Zaev.
\newblock { On the Monge-Kantorovich problem with additional linear
  constraints}.
\newblock Preprint at arXiv:1404.4962.

\end{thebibliography}
\end{document}